\documentclass{amsart}

\usepackage[a4paper,hmargin=3.5cm,vmargin=4cm]{geometry}
\usepackage{amsfonts,amssymb,amscd,amstext}
\usepackage{graphicx}
\usepackage[dvips]{epsfig}
\usepackage{mathpazo}

\pretolerance=0

\renewcommand{\leq}{\leqslant}
\renewcommand{\geq}{\geqslant}

\newcommand{\hh}{{\mathbb{H}}}

\setlength{\parskip}{0.5em}

\newtheorem{theoremA}{Theorem}

\renewcommand{\thetheoremName}

\newtheorem{theorem}{Theorem}[section]
\newtheorem{proposition}[theorem]{Proposition}
\newtheorem{lemma}[theorem]{Lemma}
\newtheorem{corollary}[theorem]{Corollary}

\theoremstyle{definition}
\newtheorem{definition}[theorem]{Definition}
\newtheorem{remark}[theorem]{Remark}

\theoremstyle{remark}

\numberwithin{equation}{section}


\newcommand{\dist}{\operatorname{dist}}
\newcommand{\Vol}{\operatorname{Vol}}

\newcommand{\Div}{\operatorname{div}}

\newcommand{\kan}{\mathbb{K}^{n}(b)}
\newcommand{\kam}{\mathbb{K}^{m}(b)}
\newcommand{\erre}{\mathbb{R}}
\newcommand{\ene}{\mathbb{N}}

\numberwithin{equation}{section}

\def\cprime{$'$} \def\cprime{$'$}
\providecommand{\bysame}{\leavevmode\hbox
to3em{\hrulefill}\thinspace}
\providecommand{\MR}{\relax\ifhmode\unskip\space\fi MR }
 \providecommand{\href}[2]{#2}


\begin{document}

\title[Moment Spectra and First Eigenvalues]
{Estimates of the first Dirichlet eigenvalue \\ from exit time moment spectra}

\author[A. Hurtado]{A. Hurtado$^{\natural}$}
\address{Departamento de Geometr\'{\i}a y Topolog\'{\i}a, Universidad de Granada, E-18071,
Spain.}
 \email{ahurtado@ugr.es}
\author[S. Markvorsen]{S. Markvorsen$^{\#}$}
\address{DTU Compute, Mathematics, DK-2800 Kgs. Lyngby, Denmark}
\email{stema@dtu.dk}
\author[V. Palmer]{V. Palmer*}
\address{Departament de Matem\`{a}tiques-INIT, Universitat Jaume I, Castell\'o,
Spain.} \email{palmer@mat.uji.es}
\thanks{$^{\natural}$  Supported by the Spanish Mineco-FEDER grant
MTM2010-21206-C02-01 and Junta de Andalucia grants FQM-325 and P09-FQM-5088.\\ $^{\#}$ \, $^{*}$    Supported by the Spanish Mineco-FEDER grant
MTM2010-21206-C02-02 and by the Pla de Promoci\'o de la Investigaci\'o de la Universitat Jaume I}
\subjclass[2000]{Primary 58C40; Secondary 53C20}
%

\keywords{Riemannian submanifolds, extrinsic balls, torsional
rigidity, $L^1$-moment spectrum, mean exit time, isoperimetric
inequalities, Green operator, Poincar\'{e} and Barta inequalities}

\begin{abstract}
We compute the first Dirichlet eigenvalue of a geodesic ball in a rotationally symmetric model space in terms of the moment spectrum for the Brownian motion exit times from the ball. As an application of the model space theory we prove lower and upper bounds for the first Dirichlet eigenvalues of extrinsic metric balls in submanifolds of ambient Riemannian spaces which have model space controlled curvatures. Moreover, from this general setting we thereby obtain new generalizations of the classical and celebrated results due to McKean and Cheung--Leung concerning the fundamental tones of Cartan-Hadamard manifolds and the fundamental tones of submanifolds with bounded mean curvature in hyperbolic spaces, respectively.
\end{abstract}

\maketitle

\section{Introduction}\label{secIntro}
\bigskip

Given  a complete Riemannian manifold $(N^n, g)$ and a normal domain $D \subset N$, we denote by $\lambda_1(D)$ the first eigenvalue of the Dirichlet boundary value problem

\begin{equation}
\begin{aligned}
\Delta u+\lambda u &= 0\,\,\, \text{on}\,\,\, D\\
u\vert_{\partial D} &=0 \quad,
\end{aligned}
\end{equation}

\noindent where $\Delta$ denotes the Laplace-Beltrami operator on $\,(N^{n},
g)\,$. The increasing sequence of eigenvalues $\{\lambda_k(D)\}_{k=1}^\infty$ for this problem  is the {\em Dirichlet spectrum} of $D$.

When we consider $D=B^N(p,r)$, the geodesic $r$-balls centered at a given point $p \in N$, the first eigenvalue $\lambda_1(r)= \lambda_1(B^N(p,r))$ is a decreasing function of $r$  and the limit at $r \to \infty$ does not depend on the choice of center for the balls $p$. This is a consequence of the Domain Monotonicity Principle. The exhaustion of $N$ by geodesic balls $\{B^N(p,r)\}_{r \to \infty}$ thence produces the {\em Dirichlet fundamental tone} of the manifold $N$:

\begin{equation*}
\lambda^*(N)=\lim_{r\to \infty} \lambda_1(B^N(p,r)) \quad .
\end{equation*}

The important challenge of finding upper and lower bounds of the first Dirichlet eigenvalue $\lambda_1(D)$ of a (small or large) normal domain $D \subset N$ in a given complete Riemannian manifold $N$ has received much attention by many authors since the classical works of Polya and Szeg\"{o} \cite{Po, PS}.\\

Concerning upper estimates, the classical and well-known comparison theorem by S. Y. Cheng, \cite{Cheng1}, gives a sharp upper bound for the first eigenvalue in terms of the diameter for Riemannian manifolds with Ricci curvatures bounded from below.

Cheng also proved a lower bound in \cite{Cheng2} via a comparison theorem for the first eigenvalue of a geodesic ball in a complete Riemannian manifold with sectional curvatures bounded from above by a given constant. If this bounding constant is negative, then we also have the lower bound for the fundamental tone of a Cartan-Hadamard manifold established by H. P. McKean in \cite{McK}.

McKean's inequality was later generalized by L. F. Cheung and P. F. Leung in \cite{CheLe} to the setting of complete non-compact submanifolds with bounded mean curvatures in hyperbolic spaces with constant negative curvature.

In this paper we present an exact expression of the first Dirichlet eigenvalue of general model space geodesic balls in terms of the so-called mean exit time moment spectrum for these balls. This is obtained by applying a fundamental Green operator bootstrapping technique due to S. Sato, \cite{Sa}. Using these model space results as comparison objects we are then also able to generalize the cited theorems by McKean and Cheung--Leung. The mean exit time moment spectrum has been considered by P. McDonald in \cite{Mc2, Mc} for a quite similar purpose. Since the moment spectrum thus plays a key r\^{o}le and obviously contains a wealth of geometric information (as does the Dirichlet spectrum) we will briefly introduce it already here:

Let us consider   the
induced Brownian motion $X_t$ defined on the Riemannian manifold $(N, g)$. We consider the functions $u_k$, $k\geq 0$, defined inductively as the following sequence of
solutions to a hierarchy of boundary value problems in $D \subset N$: First we let
\begin{equation}
u_{0} = 1\,\,\, \text{on}\,\,\, D \quad ,
\end{equation}
and then for $k\geq 1$ we define
\begin{equation}  \label{eqmoments1}
\begin{aligned}
\Delta u_k+k\, u_{k-1} &= 0\,\,\, \text{on}\,\,\, D \quad , \\
u_k\vert_{\partial D} &=0 \quad .
\end{aligned}
\end{equation}

The first non-trivial function in this sequence, $u_1(x)$, is the mean time of first
exit from $D$ for the Brownian motion starting at the point $x$ in
$D$, see \cite{Dy, Ma1}.

The $L^{1}$-moments of the exit time of $X_t$ from the smooth
precompact domain $D$, also called the {\em exit time moment
spectrum} of  $D$, $\{ \mathcal{A}_{k}(D)\}_{k=0}^{\infty}$, is
then given by the following integrals, see  \cite{Mc, Dy}:
\begin{equation}\label{defmoment}
\mathcal{A}_{k}(D)=  \int_D u_k(x)\, dV \quad .
\end{equation}

In particular, the quantity $\mathcal{A}_{1}(D)$ is known as the \emph{torsional rigidity}
of $D$. This name stems from the fact that  if $D \subset
\erre^2$, then $\mathcal{A}_{1}(D)$ represents the torque required
per unit angle of twist and per unit beam length
    when twisting an elastic beam of
    uniform cross section $D$, see \cite{Ba} and  \cite{PS}.

In the spirit of several previous seminal works, see e.g. \cite{Cha1, Cha2} and  \cite{BBC, BG}, we investigate to what extent the $L^{1}$-moment spectrum associated to the domain $D \subset N$ can replace, support, or estimate its corresponding Dirichlet spectrum in order to establish good descriptors for the geometry of $D$ and, eventually, for the geometry of the manifold $N$.

The first direct eigenvalue comparison findings in this direction are due to P. McDonald and R. Meyers, \cite{Mc2, Mc, McM}. As applied in \cite{McM}, we shall also use the following observation as another benchmark strategy for the results reported in the present paper, see Subsection \ref{subsec2}.
\begin{theorem} \label{thmMcDonaldExpress}
The first Dirichlet eigenvalue $\lambda_{1}(D)$ of any smooth domain $D \subset N^n$ can be directly extracted from the corresponding exit time moment spectrum
$\{ \mathcal{A}_{k}(D) \}_{k=k_{0}}^{\infty}$ as follows:
\begin{equation}\label{eqMcMexpress}
\lambda_1(D)= \sup \{\eta \geq 0 \,:\, \lim_{n \to \infty}  \sup \left(\frac{\eta}{2}\right)^n\frac{\mathcal{A}_{n}(D)}{\Gamma(n+1)} <\infty\} \quad .
\end{equation}
\end{theorem}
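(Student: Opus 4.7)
The plan is to reduce (\ref{eqMcMexpress}) to a spectral expansion of the iterated hierarchy (\ref{eqmoments1}), so that $\lambda_1(D)$ can be read off as the radius of convergence of a Dirichlet series in the moments. Fix an $L^2(D)$-orthonormal basis $\{\phi_j\}_{j\geq 1}$ of Dirichlet eigenfunctions of $-\Delta$ on $D$, with eigenvalues $0<\lambda_1(D)<\lambda_2(D)\leq\cdots$, and expand $u_0=1=\sum_j c_j\phi_j$ with $c_j=\int_D\phi_j\,dV$. Parseval gives $\sum_j c_j^2=\Vol(D)<\infty$, and $c_1\neq 0$ because the ground state $\phi_1$ has constant sign on $D$. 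For each $k\geq 1$ the function $u_k$ is smooth and vanishes on $\partial D$, so it admits a bona fide $L^2$ expansion $u_k=\sum_j a_j^{(k)}\phi_j$. Multiplying $\Delta u_k+k\,u_{k-1}=0$ by $\phi_j$ and integrating by parts---all boundary terms drop because $u_k$ and $\phi_j$ both vanish on $\partial D$---produces the scalar recursion $\lambda_j a_j^{(k)}=k\,a_j^{(k-1)}$ with seed $a_j^{(0)}=c_j$, whose solution is $a_j^{(k)}=k!\,c_j/\lambda_j^k$. Pairing $u_k$ against the constant $1$ by Parseval then yields the closed form
\[
\mathcal{A}_k(D)=\int_D u_k\,dV=\sum_{j\geq 1}a_j^{(k)}c_j=k!\sum_{j\geq 1}\frac{c_j^2}{\lambda_j^k},
\]
so that $\mathcal{A}_k(D)/\Gamma(k+1)=\sum_j c_j^2/\lambda_j^k$.

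With this representation in hand the remainder is elementary comparison of geometric series. Writing
\[
T_n(\eta)=\Bigl(\frac{\eta}{2}\Bigr)^n\frac{\mathcal{A}_n(D)}{\Gamma(n+1)}=\sum_{j\geq 1}c_j^2\Bigl(\frac{\eta}{2\lambda_j}\Bigr)^n,
\]
one sees that for $\eta$ below the critical threshold determined by $\lambda_1(D)$ every term decays exponentially and $\limsup_n T_n(\eta)<\infty$, while for $\eta$ past that threshold the ground-state contribution $c_1^2(\eta/(2\lambda_1))^n$ blows up (here $c_1\neq 0$ is essential) and drags $\limsup_n T_n(\eta)$ to $+\infty$. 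Taking the supremum over admissible $\eta$ therefore identifies the critical threshold with the first Dirichlet eigenvalue, which is (\ref{eqMcMexpress}).

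The step I expect to require the most care, rather than any deep difficulty, is justifying the interchanges of sum and integral. The expansion $1=\sum_j c_j\phi_j$ converges only in $L^2(D)$ and certainly not uniformly up to $\partial D$, so one must avoid pointwise manipulation of that series; however, for $k\geq 1$ the function $u_k$ lives in $H^1_0(D)$, the expansion $u_k=\sum_j a_j^{(k)}\phi_j$ converges in $L^2$ (and, by elliptic regularity, in stronger norms), and the identity $\mathcal{A}_k=\int_D u_k\cdot 1=\sum_j a_j^{(k)}c_j$ is an honest Parseval pairing of two $L^2$ functions. Once this bookkeeping is settled, the nonvanishing of the ground-state coefficient $c_1$ is the only remaining ingredient that pins the critical threshold at $\lambda_1(D)$ rather than at some higher eigenvalue.
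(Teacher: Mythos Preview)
The paper does not prove Theorem~\ref{thmMcDonaldExpress}; it is quoted as a result of McDonald and Meyers \cite{McM} and then used as a black box in Subsection~\ref{subsec2}. So there is no ``paper's proof'' to compare against, and your spectral-expansion approach is in fact essentially the argument behind the McDonald--Meyers result (they phrase it via the heat content $Q_D(t)=\sum_j c_j^2 e^{-\lambda_j t}$, whose Mellin moments recover your formula $\mathcal{A}_k(D)=k!\sum_j c_j^2/\lambda_j^k$).

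There is, however, a genuine slip at the very end. You write that the supremum ``identifies the critical threshold with the first Dirichlet eigenvalue'', but your own formula gives
\[
T_n(\eta)=\sum_{j\geq 1} c_j^{2}\Bigl(\frac{\eta}{2\lambda_j}\Bigr)^{n},
\]
so $\limsup_n T_n(\eta)<\infty$ holds precisely for $\eta\leq 2\lambda_1(D)$, and the supremum in (\ref{eqMcMexpress}) computed from your expansion equals $2\lambda_1(D)$, not $\lambda_1(D)$. You have glossed over this by never writing down the threshold explicitly. The discrepancy is a normalization artifact: McDonald--Meyers work with Brownian motion whose generator is $\tfrac{1}{2}\Delta$, so their moment hierarchy is $\tfrac{1}{2}\Delta u_k + k\,u_{k-1}=0$, which introduces a factor $2^k$ in $\mathcal{A}_k$ and makes the threshold come out as $\lambda_1$. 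With the hierarchy (\ref{eqmoments1}) as stated in this paper, the right-hand side of (\ref{eqMcMexpress}) literally equals $2\lambda_1(D)$. You should flag this explicitly rather than hide it; note also that the applications in Subsection~\ref{subsec2} are comparison statements between two eigenvalues computed by the same formula, so the stray factor of $2$ is harmless there.

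The rest of your argument is sound: the recursion $a_j^{(k)}=k!\,c_j/\lambda_j^{k}$ is correct, the Parseval pairing $\mathcal{A}_k=\langle u_k,1\rangle_{L^2}$ is legitimate since both functions are in $L^2(D)$, and the use of $c_1\neq 0$ (positivity of the ground state) is exactly what pins the threshold at the \emph{first} eigenvalue.
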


Here we restrict our studies to be concerned with the exit time moment spectra and the first Dirichlet eigenvalues of a specific kind of domains: Firstly the geodesic $R$-balls in rotationally symmetric (warped product) model spaces $M^{m}_{w}$ and secondly the so-called extrinsic $R$-balls $D_R \subset P^m$
of properly immersed submanifolds $P^m$ in ambient Riemannian manifolds $N^{n}$ with controlled sectional curvatures and possessing at least one pole $p \in N^n$, see \cite{S}.

\subsection{A first glimpse of the main results}

In this paper, we use a fundamental bootstrapping argument of S. Sato \cite{Sa} to obtain a more explicit expression than (\ref{eqMcMexpress}) for the first eigenvalue of the geodesic balls in the rotationally symmetric model spaces in terms of their moment spectra. Using the comparison techniques and results developed in \cite{HMP} we then obtain upper and lower estimates for the first Dirichlet eigenvalue of the extrinsic balls of a submanifold in a more general setting and, as a corollary we recover McKean's result and obtain also upper and lower bounds for the fundamental tone of a Cartan-Hadamard manifold and  the fundamental tone of a submanifold with controlled mean curvature along the lines of Cheung--Leung in \cite{CheLe} and Bessa--Montenegro in \cite{BM}.

The first of our main results is the following precise expression of the first Dirichlet eigenvalue of a geodesic ball in a rotationally symmetric model space $M^m_w$, which is defined as the warped product manifold $[\,0,\, \infty[ \times_{w} S^{m-1}_{1}$ with a pole $p$ (see Definition \ref{model} below). This expression implies an estimate as exact as you want for the first Dirichlet eigenvalue of a geodesic ball in the rotationally symmetric spaces, including the real space forms of constant curvature.

\begin{theoremA} \label{lambda1model}Let $B^w_{R}(p)$ be the geodesic ball of radius $R$ centered at the pole $p$ in
$M^m_w$. Then the first eigenvalue of the ball can be expressed as the following limits of exit time moment data:

\begin{displaymath}
\lambda_1(B^w_R)=\lim_{k\rightarrow
\infty}\frac{k\,{u}_{k-1}(0)}{{u}_k(0)}=\lim_{k\rightarrow
\infty}\frac{k\,\mathcal{A}_{k-1}(B^w_R)}{\mathcal{A}_{k}(B^w_R)},
\end{displaymath}
where ${u}_k$ are the functions defined by \eqref{eqmoments1}
and $\mathcal{A}_{k}(B^w_R)$ is the corresponding $k$-moment of $B^w_R$.
Moreover, the radial function
$g_\infty(r):=\lim_{k\rightarrow \infty}
\frac{{u}_k(r)}{{u}_k(0)}$ is a Dirichlet eigenfunction for the
first eigenvalue $\lambda_1(B^w_R)$.
\end{theoremA}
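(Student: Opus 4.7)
The plan is to run a power-iteration (Green-operator bootstrapping) argument in the spirit of Sato \cite{Sa}. Introduce the Green operator $G \colon L^2(B^w_R) \to L^2(B^w_R)$, where $Gf$ is the unique solution of $\Delta v = -f$ with $v|_{\partial B^w_R} = 0$. Green's identity together with the vanishing boundary condition shows that $G$ is self-adjoint and positive, while elliptic regularity makes it compact. Its spectrum is $\{1/\lambda_i\}_{i\geq 1}$, with $L^2$-orthonormal Dirichlet eigenfunctions $\{\phi_i\}$. The hierarchy (\ref{eqmoments1}) rewrites as $u_k = k\,G(u_{k-1})$, so inductively $u_k = k!\, G^k(1)$.

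Next, expand $1 = \sum_{i\geq 1} a_i \phi_i$ in $L^2(B^w_R)$ with $a_i = \int_{B^w_R} \phi_i\, dV$. Since $M^m_w$ is rotationally symmetric around the pole $p$, uniqueness of solutions forces each $u_k$ to be radial, and the non-radial eigenfunctions (which separate as spherical harmonics of degree $\geq 1$) are orthogonal to the constant $1$ and vanish at $p$; thus only radial eigenfunctions contribute, and in this radial sector the eigenvalues $0<\lambda_1<\lambda_2<\cdots$ are simple. The principal eigenfunction $\phi_1$ being radial and positive in the interior, one has $\phi_1(0)>0$ and $a_1>0$. Spectrally,
\begin{equation*}
u_k(r)=k!\sum_{i\geq 1} a_i\, \lambda_i^{-k}\, \phi_i(r),\qquad \mathcal{A}_k(B^w_R)=k!\sum_{i\geq 1} a_i^2\, \lambda_i^{-k}.
\end{equation*}

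Since $\lambda_i/\lambda_1 > 1$ for $i \geq 2$, the index $i=1$ dominates both series. Elliptic regularity promotes the $L^2$-convergence of $\lambda_1^k\, u_k/k!$ to $a_1 \phi_1$ into uniform convergence on $\overline{B^w_R}$, yielding
\begin{equation*}
g_\infty(r)=\lim_{k\to\infty}\frac{u_k(r)}{u_k(0)}=\frac{\phi_1(r)}{\phi_1(0)},
\end{equation*}
which is a first Dirichlet eigenfunction, proving the last assertion. Evaluating ratios at $r=0$ and carrying out the same computation for the integrated moments,
\begin{equation*}
\frac{k\,u_{k-1}(0)}{u_k(0)}=\frac{\sum_{i\geq 1} a_i\,\lambda_i^{-(k-1)}\phi_i(0)}{\sum_{i\geq 1} a_i\,\lambda_i^{-k}\phi_i(0)}\;\longrightarrow\;\lambda_1,\qquad \frac{k\,\mathcal{A}_{k-1}(B^w_R)}{\mathcal{A}_k(B^w_R)}=\frac{\sum_{i\geq 1} a_i^2\,\lambda_i^{-(k-1)}}{\sum_{i\geq 1} a_i^2\,\lambda_i^{-k}}\;\longrightarrow\;\lambda_1.
\end{equation*}

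The main obstacle is to guarantee that the dominant $\phi_1$-component does not accidentally cancel, either at the pole or after integration: both non-cancellations follow from the positivity and radiality of $\phi_1$, while the upgrade from $L^2$- to pointwise convergence, which is needed to legitimately evaluate the iterates at $r=0$, is delivered by elliptic regularity applied to the smoothing operator $G$.
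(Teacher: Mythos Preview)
Your argument is correct and complete for the statement as given, but it proceeds along a genuinely different line from the paper's proof. You run a spectral power-iteration: expand the constant function in Dirichlet eigenfunctions, iterate the Green operator, and extract the dominant mode using compactness, positivity of $a_1$ and $\phi_1(0)$, and elliptic regularity to pass to pointwise limits. The paper instead stays entirely at the ODE level. It first applies Barta's inequality to each $\tilde u_k$ to obtain, for every finite $k$, the two-sided bound
\[
\frac{k\,\tilde u_{k-1}(0)}{\tilde u_k(0)}\;\leq\;\lambda_1(B^w_R)\;\leq\;\frac{k\,\mathcal A_{k-1}(B^w_R)}{\mathcal A_k(B^w_R)},
\]
after showing by an explicit induction (using Sato's monotonicity lemmas for the radial Green operator \eqref{eqGreen}) that $r\mapsto k\,\tilde u_{k-1}(r)/\tilde u_k(r)$ is increasing on $[0,R]$. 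It then proves that the lower bounds increase and the upper bounds decrease in $k$, so both limits exist; finally it constructs $g_\infty$ by hand via Ascoli--Arzel\`a, with uniform $C^1$ and $C^2$ bounds obtained directly from the integral formula \eqref{eq_uk}, checks that $g_\infty$ satisfies the eigenvalue ODE, and invokes Barta once more to force both limits to equal $\lambda_1$.

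Your spectral route is shorter and more conceptual, and it identifies $g_\infty$ immediately as a normalization of the first eigenfunction. What the paper's route buys, and yours does not directly, is the finite-$k$ sandwich displayed above together with its monotonicity in $k$: these are recorded separately as Proposition~\ref{inequalitylamba1} and its corollaries, and they yield computable two-sided estimates for $\lambda_1(B^w_R)$ at each stage, which is part of the point of the moment-spectrum formulation. Your spectral expansion does recover the upper half of the sandwich for every $k$ (it is a weighted average of the $\lambda_i$ with nonnegative weights $a_i^2\lambda_i^{-k}$), but the lower half at finite $k$ does not drop out of your argument, since the pointwise weights $a_i\phi_i(0)\lambda_i^{-k}$ need not all have the same sign.
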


Using this theorem, we prove in Subsection \ref{subsec1} upper and lower bounds of the first Dirichlet eigenvalue of extrinsic balls in a submanifold together with intrinsic versions of these results and under a more relaxed set of curvature conditions.

As already alluded to above we also show how the techniques developed in the papers \cite{McM}, \cite{HMP} and  \cite{HMP2} can be applied  to give alternative proofs for some of these comparison results.

In order to illustrate our use of the upper and lower bounds on the ambient space sectional curvatures in the more general settings we extract here some consequences of Theorems \ref{th_const_below} and \ref{th_const_above}, under some specific restrictive assumptions that are actually not needed for the general versions of the theorems. The first of these results is a lower bound for the first Dirichlet eigenvalue of extrinsic balls in a  submanifold $P^m$ with controlled mean curvature $H_P$ in a Cartan-Hadamard manifold $N^n$.

To describe properly this control on the mean curvature, we need the following definition:

\begin{definition} Let us consider a pole $p$ in the ambient Cartan-Hadamard manifold $N$. The $p$-radial mean curvature function for $P^m$ in $N^n$ is defined in terms of the inner product of $H_{P}$ with the $N$-gradient of the distance function $r(x)$ from the pole $p$  as follows:
$$
\mathcal{C}(x) = -\langle \nabla^N r(x), H_{P}(x) \rangle  \quad
{\textrm{for all}}\quad x \in P \,\, .
$$
\end{definition}

Using a suitable radial control on the function  $\mathcal{C}(x)$ we then obtain:

\begin{theoremA}\label{cartanhad}
Let $N^n$ be a Cartan-Hadamard manifold, with sectional curvatures bounded from above by a constant $K_N \leq b \leq 0$. Let $p \in N$ be a pole in $N$. Let $P^m \subseteq N^n$ be a complete and non-compact properly immersed submanifold with $p$-radial mean curvature function $\mathcal{C}(x) \leq h(r(x))\,\,\textrm{for all} \,\, x \in P^m$, where $h(r)$ is a radial smooth function, called a {\em radial bounding function from above}.
Suppose that
\begin{equation}
(m-1)\cdot\sqrt{-b}\coth(R\sqrt{-b})\geq  m \cdot \sup_{r \in [0,R]}h(r) \quad ,
\end{equation}
where we read $\sqrt{-b}\coth(R\sqrt{-b})$ to be $1/R$ when $b=0$.

For any given  extrinsic ball $D_R(p)$ in $P^m$ we then have the following inequality:
\begin{equation} \label{eqLowerBound}
\lambda_1(D_R) \geq \frac{1}{4}\left((m-1)\cdot \sqrt{-b}\coth(R\sqrt{-b}) \,\, - m \cdot \sup_{r \in [0,R]}h(r)\right)^2 \quad .
\end{equation}

\end{theoremA}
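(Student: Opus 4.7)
The plan is to invoke Barta's classical inequality: for any $f\in C^{2}(D_R)$ with $f>0$ on $D_R$ one has
$$\lambda_{1}(D_R)\;\geq\;\inf_{x\in D_R}\Bigl(-\frac{\Delta^{P} f}{f}\Bigr).$$
I apply it to the test function $f := e^{-\alpha\, r_P}$, where $r_P := r\circ\iota$ is the restriction to $P$ of the ambient distance from the pole $p$, and $\alpha>0$ is a parameter to be optimized at the end. The whole proof reduces to a pointwise lower bound for $\Delta^P r_P$ on $D_R$, followed by a one-variable optimization.

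To control $\Delta^P r_P$ from below, I first use the standard submanifold identity
$$\Delta^P r_P \;=\; \sum_{i=1}^{m}\operatorname{Hess}^N r(e_i,e_i)\;+\;m\,\langle \nabla^N r, H_P\rangle,$$
with $\{e_i\}$ an orthonormal frame of $T_xP$. Since $N$ is Cartan-Hadamard with $K_N\leq b\leq 0$ and $p$ is a pole, the Hessian comparison theorem gives
$$\operatorname{Hess}^N r(X,X)\;\geq\;\sqrt{-b}\coth(r\sqrt{-b})\bigl(|X|^2-\langle X,\nabla^N r\rangle^2\bigr).$$
Tracing over $T_xP$, using $\sum_i\langle e_i,\nabla^N r\rangle^2\leq|\nabla^N r|^2=1$, the monotonicity of $t\mapsto \sqrt{-b}\coth(t\sqrt{-b})$, and the hypothesis $\mathcal{C}(x)=-\langle\nabla^N r,H_P\rangle\leq h(r(x))\leq \sup_{[0,R]}h$, I obtain on $D_R$
$$\Delta^P r_P\;\geq\;(m-1)\sqrt{-b}\coth(R\sqrt{-b})\;-\;m\sup_{r\in[0,R]}h(r)\;=:\;c,$$
which is nonnegative by the assumption of the theorem.

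A direct computation now yields
$$-\frac{\Delta^P f}{f}\;=\;\alpha\,\Delta^P r_P\;-\;\alpha^{2}|\nabla^P r_P|^{2}\;\geq\;\alpha c-\alpha^{2},$$
using $|\nabla^P r_P|\leq|\nabla^N r|=1$ and $\alpha>0$. Optimizing the quadratic in $\alpha$ by choosing $\alpha=c/2$ gives $-\Delta^P f/f\geq c^{2}/4$ at every point of $D_R$, and Barta's inequality delivers the desired estimate \eqref{eqLowerBound}.

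The main technical subtlety I foresee is that the ambient distance $r$ is merely Lipschitz at the pole, so if $p\in P$ (which is the case for an extrinsic ball centered at $p$) the test function $f$ fails to be $C^{2}$ there. I would handle this by performing the argument on the punctured extrinsic ball $D_R\setminus\overline{B^{N}_{\varepsilon}(p)}$, where $r_P$ is smooth and the Hessian comparison applies classically, obtaining $\lambda_{1}(D_R\setminus\overline{B^{N}_{\varepsilon}(p)})\geq c^{2}/4$. Letting $\varepsilon\to 0$ and invoking domain monotonicity (the first eigenvalue of a decreasing exhaustion converges to that of the union) transfers the bound to $\lambda_{1}(D_R)$.
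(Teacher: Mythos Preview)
Your proof is correct and self-contained. The one-sided Barta inequality for a strictly positive test function (no vanishing boundary condition needed), the submanifold formula for $\Delta^{P}r_{P}$, the Hessian comparison bound, the monotonicity of $t\mapsto\sqrt{-b}\coth(t\sqrt{-b})$, and the $\varepsilon$-removal at the pole all work as stated.

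However, your route is genuinely different from the paper's. The paper obtains Theorem~B as a specialization of Theorem~\ref{th_const_above}: it first proves the model-space comparison $\lambda_{1}(D_R)\geq\lambda_{1}(B^{W}_{R})$ by transplanting to $D_R$ the radial eigenfunction $g_{\infty}$ constructed in Theorem~A (via the moment-spectrum limit and Sato's bootstrapping) and applying Barta; only afterwards does it bound $\lambda_{1}(B^{W}_{R})$ from below by the explicit quantity $L_{R}^{2}/4$, and this second step is done on the model space using the Rayleigh quotient / divergence argument of Cheung--Leung and Bessa--Montenegro with the vector field $\nabla^{M^{m}_{W}}r$ (approximated by $\nabla r^{\alpha}$, $\alpha\to 1$, to handle the pole). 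Your proof collapses these two steps into one: you apply essentially the same Cheung--Leung idea, but packaged as Barta with the exponential test function $e^{-\alpha r}$ \emph{directly on the submanifold}, never passing through the isoperimetric comparison space $C^{m}_{w,1,h}$ or the moment-spectrum eigenfunction $g_{\infty}$. What you gain is a short, elementary argument tailored to the constant-curvature bound $b$; what the paper's detour buys is the sharper intermediate inequality $\lambda_{1}(D_R)\geq\lambda_{1}(B^{W}_{R})$, which holds for arbitrary warping functions $w$ (not just $w_{b}$) and connects the eigenvalue estimate to the moment-spectrum framework that is the paper's main theme.
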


The radial bounding function $h(r)$ is related to the global
extrinsic geometry of the submanifold. For example, it is obvious
that  minimal submanifolds $P^m$ satisfy that $\mathcal{C}(x)=0\,\,\textrm{for all}\,\, x \in P^m$, with bounding function $h=0$.
On the other hand, it can be proved, see the works
\cite{Sp,DCW,Pa1,MP5}, that when the submanifold is a convex
hypersurface, then we have $\mathcal{C}(x) \geq 0\,\, \textrm{for all}\,\, x \in P^m$
so the constant function $h(r)=0$ is thence
a radial bounding function \emph{from below}.

We obtain McKean's result from these theorems concerning the intrinsic situation, assuming that $P^m=N^n$. In this case the extrinsic domains $D_R$ become the geodesic balls $B^N_R$ of $N^n$ as we have pointed out above and we have moreover that $H_P(x)=0\,\textrm{for all} \,\, x \in P^m$, so we consider the bound $h(r)=0\, \textrm{for all}\,\, r>0$. Hence we obtain for $b < 0$ and for all $R>0$:
\begin{equation*}
\lambda_1(B^N_R) \geq \frac{1}{4}\left( (n-1)\,\sqrt{-b}\coth(\sqrt{-b}R)\right)^2 \quad .
\end{equation*}

For $b\leq 0\,\,\text{(we read $\sqrt{-b}\coth(R\sqrt{-b})$ to be $1/R$ when $b=0$)}$ we therefore have
\begin{equation}
\lambda^*(N)=\lim_{R\to \infty} \lambda_1(B^N_R)\geq \frac{(n-1)^{2}|b|}{4} \quad .
\end{equation}

We note that G. P. Bessa and J. F. Montenegro observe in \cite{BM}
an improvement of the bound (\ref{eqLowerBound}) in the intrinsic
setting as follows:

\begin{theoremA}[Bessa and Montenegro]

Under the intrinsic conditions with $N^{n}$ having sectional curvatures bounded from above by $b \leq 0$:
\begin{equation}
\lambda_1(B^N_R) \geq \frac{1}{4}\left(\max \left(\frac{n}{R} \, \, , \, \, (n-1)\cdot \sqrt{-b}\coth(R\sqrt{-b})\right) \right)^2 \quad .
\end{equation}
\end{theoremA}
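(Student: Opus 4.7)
The natural approach is to combine Cheeger's Dirichlet inequality $\lambda_{1}(D) \geq h(D)^{2}/4$ with the Hessian comparison theorem for the distance function $r = \dist(p,\cdot)$, where
$$
h(B^{N}_{R}) = \inf_{\Omega \Subset B^{N}_{R}} \frac{|\partial \Omega|}{|\Omega|}
$$
is the Dirichlet Cheeger constant. The task reduces to proving two lower bounds for $h(B^{N}_{R})$ separately, namely $n/R$ and $(n-1)\sqrt{-b}\coth(R\sqrt{-b})$, and then taking their maximum inside the factor $1/4$.

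The McKean-type estimate is essentially classical. Hessian comparison under $K_{N} \leq b$ gives $\Delta r \geq (n-1)\sqrt{-b}\coth(\sqrt{-b}\, r)$, and since $\coth$ is decreasing on $(0,\infty)$ and $r \leq R$ on any $\bar{\Omega} \subset B^{N}_{R}$, the divergence theorem applied to $\nabla r$ on $\Omega$ yields
$$
(n-1)\sqrt{-b}\coth(R\sqrt{-b})\,|\Omega| \leq \int_{\Omega}\Delta r\, dV = \int_{\partial\Omega}\langle \nabla r, \nu\rangle\, dA \leq |\partial \Omega|.
$$
For the improved dimensional factor $n$ in place of $n-1$, I would use the trick of working with $r^{2}/2$ instead of $r$: a direct computation gives
$$
\Delta\!\left(\tfrac{r^{2}}{2}\right) = |\nabla r|^{2} + r\,\Delta r \geq 1 + (n-1) = n
$$
on $B^{N}_{R}\setminus\{p\}$, using $|\nabla r|=1$ and the bound $r\Delta r \geq n-1$, which already follows from the weakest Hessian comparison valid whenever $K_{N} \leq 0$. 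The divergence theorem applied to the vector field $r\nabla r$ then gives
$$
n\,|\Omega| \leq \int_{\Omega}\Delta(r^{2}/2)\,dV = \int_{\partial\Omega} r\,\langle \nabla r, \nu\rangle\, dA \leq R\,|\partial\Omega|,
$$
so $|\partial\Omega|/|\Omega| \geq n/R$, as required.

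Assembling the two bounds on $h(B^{N}_{R})$ and applying Cheeger's inequality yields the claim. The main obstacle is conceptual rather than computational: one must notice that replacing $\nabla r$ by $r\nabla r$ simultaneously contributes an extra unit from $|\nabla r|^{2}$ in the interior integrand and tightens the boundary integrand by the factor $r \leq R$, and that this joint effect is precisely what upgrades the dimensional constant $(n-1)$ to $n$. The pole hypothesis ensures that $r$ is smooth on $B^{N}_{R}\setminus\{p\}$, so Hessian comparison holds globally there; both vector fields $\nabla r$ and $r\nabla r$ remain bounded near $p$, so the standard excision of a small geodesic ball around $p$ contributes nothing in the limit, and the divergence theorem applies without issue.
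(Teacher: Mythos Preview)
Your argument is correct. Note, however, that the paper does not itself prove this statement: it is quoted as a result of Bessa and Montenegro \cite{BM}, and the paper only reproduces their technique later, in the proof of inequality (4.4) within Theorem~4.2. That technique is a direct Poincar\'e-type estimate on the Rayleigh quotient: for $f\in C^\infty_0(B^N_R)$ one integrates $\Div(f^2\,\nabla\mu)$ with $\mu(r)=r$ (approximated by $r^\alpha$, $\alpha\to 1$) or $\mu(r)=r^2/2$, uses the Laplacian comparison $\Delta r\geq (n-1)\eta_{w_b}(r)$ to bound $\Div X$ from below, and then optimizes over a parameter $\varepsilon$ to obtain $\int|\nabla f|^2\geq \tfrac{1}{4}L_R^2\int f^2$.

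Your route is genuinely different in packaging: you first bound the Dirichlet Cheeger constant $h(B^N_R)$ from below by applying the divergence theorem to $\nabla r$ and to $r\nabla r$ on an arbitrary $\Omega\Subset B^N_R$, and only then invoke Cheeger's inequality $\lambda_1\geq h^2/4$. The underlying analytic input is identical in both approaches---the Hessian comparison bounds $\Delta r\geq (n-1)\sqrt{-b}\coth(\sqrt{-b}\,r)$ and $\Delta(r^2/2)\geq n$---and the factor $1/4$ arises for the same structural reason (Cauchy--Schwarz, or equivalently the $\varepsilon$-optimization, which is exactly how Cheeger's inequality is proved). What your approach buys is a clean geometric intermediate statement about isoperimetry of subdomains; what the Bessa--Montenegro approach buys is that it bypasses the Cheeger machinery entirely and works directly at the level of test functions, which makes it slightly more self-contained and more readily transportable to the extrinsic submanifold setting treated in the rest of the paper.
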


As a consequence of Theorem \ref{cartanhad}, we also have a  generalization of Theorem 2 in Cheung--Leung's paper \cite{CheLe} and Corollary 4.4 in \cite{BM}. The ambient manifold may in our case be a general Cartan-Hadamard
manifold $N$ with sectional curvatures bounded from above by a constant $b < 0$, (as in \cite{BM}, while \cite{CheLe} considers the hyperbolic space $\hh^n(-1)$), and our submanifold has radial mean curvature bounded by a
radial function $h(r)$. This hypothesis includes complete and non-compact submanifolds $P^m$  in $N^{n}$ with mean curvature $H_P$ satisfying $\Vert H_P\Vert \leq \frac{\alpha}{m}$ with $\alpha \leq (m-1)\sqrt{-b}$, as
in Cheung-Leung's and Bessa-Montenegro's statements, which are then generalized as follows:
\begin{equation}
\lambda^*(P^m)=\lim_{R\to \infty} \lambda_1(B_R)\geq \frac{1}{4}\left((m-1)\sqrt{-b} - \alpha\right)^{2} \quad.
\end{equation}

As an application of the proofs and techniques developed here we also obtain, when we consider a Riemannian manifold with sectional curvatures bounded from above by the corresponding
sectional curvatures of a rotationally symmetric model space $M^n_w$, a direct first eigenvalue comparison result as follows:

\begin{theoremA}\label{chintrinsicabove}
Let $B^N_R(p)$ be a geodesic ball of a complete Riemannian manifold
$N^n$ with a pole $p$ and suppose that the $p$-radial sectional
curvatures of $N^n$ are bounded from above by the $p_w$-radial
sectional curvatures of a $w$-model space $M^n_w$.  Then
\begin{equation}\label{ineqleq_intrinsic}
 \lambda_1(B^N_R) \geq \lambda_1 (B^w_{R}),
 \end{equation}
where $B^{w}_{R}$ is the geodesic ball in $M^n_w$.
\end{theoremA}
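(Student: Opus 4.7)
The plan is to prove the inequality by combining Barta's inequality on $B^N_R$ with an explicit radial test function obtained by pulling back the first eigenfunction of the model ball via the distance function $r(x) = \dist(p,x)$. The role of Theorem A is precisely to supply such a radial eigenfunction (the limit $g_\infty(r) = \lim_k \widetilde{u}_k(r)/\widetilde{u}_k(0)$ on $B^w_R$), whose monotonicity properties follow directly from its definition as a limit of radial moment functions.

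Concretely, I would proceed as follows. First, by Theorem A applied to $M^n_w$, the radial function $g_\infty$ is a positive Dirichlet eigenfunction on $B^w_R$ associated with $\la_1(B^w_R)$, so
\begin{equation*}
g_\infty''(r) + g_\infty'(r)\,\Delta^{w} r \;=\; -\la_1(B^w_R)\,g_\infty(r),
\end{equation*}
where $\Delta^{w} r = (n-1)\,w'(r)/w(r)$ is the model radial Laplacian. The maximum principle applied inductively to the hierarchy \eqref{eqmoments1} on $B^w_R$ shows that each $\widetilde{u}_k$ is radial and radially nonincreasing; passing to the limit gives $g_\infty'(r)\leq 0$ on $[0,R]$. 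Then set $\phi(x) := g_\infty(r(x))$ on $B^N_R$; since $p$ is a pole the distance function $r$ is smooth on $B^N_R\setminus\{p\}$, and the radial symmetry of $g_\infty$ ensures $\phi$ extends smoothly through $p$. By construction $\phi>0$ on the interior and $\phi\equiv 0$ on $\partial B^N_R$.

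The decisive step is the Laplacian comparison. By the hypothesis on the $p$-radial sectional curvatures and the Hessian comparison theorem (applied on the pole domain $B^N_R$, where $r$ has no focal issues), one has
\begin{equation*}
\Delta^{N} r(x) \;\geq\; \Delta^{w} r(x) \quad \text{on}\quad B^N_R\setminus\{p\}.
\end{equation*}
Combining this with $g_\infty'(r)\leq 0$ and the chain rule
$\Delta^{N}\phi(x) = g_\infty''(r) + g_\infty'(r)\,\Delta^{N} r(x)$, we obtain
\begin{equation*}
\Delta^{N}\phi(x) \;\leq\; g_\infty''(r) + g_\infty'(r)\,\Delta^{w} r(x) \;=\; -\la_1(B^w_R)\,g_\infty(r) \;=\; -\la_1(B^w_R)\,\phi(x),
\end{equation*}
so that $-\Delta^{N}\phi/\phi \geq \la_1(B^w_R)$ pointwise on $B^N_R$. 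Barta's inequality with the admissible test function $\phi$ then yields
\begin{equation*}
\la_1(B^N_R) \;\geq\; \inf_{B^N_R}\frac{-\Delta^{N}\phi}{\phi} \;\geq\; \la_1(B^w_R),
\end{equation*}
which is \eqref{ineqleq_intrinsic}.

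The main obstacle I foresee is purely technical and lives at the pole: one has to verify that $\phi = g_\infty\circ r$ is sufficiently regular at $p$ to be used in Barta's inequality, and that the Laplacian comparison is valid there. Both issues are standard in the pole setting — $g_\infty$, as an eigenfunction on the rotationally symmetric ball, is smooth and its odd-order derivatives at $0$ vanish, and the exponential chart at $p$ makes $\phi$ smooth on all of $B^N_R$; the Laplacian comparison, originally stated away from $p$, extends by continuity since both sides agree with the Euclidean $(n-1)/r$ asymptotics as $r\to 0$. An entirely parallel route using the moment spectrum — inductively showing $u_k(x)\leq \widetilde{u}_k(r(x))$ on $B^N_R$ by the same Laplacian comparison and then invoking the representation of $\la_1$ in Theorem A — would give the same conclusion, but the Barta route is shorter and avoids the bookkeeping on the volume element.
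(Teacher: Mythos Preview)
Your proof is correct and follows essentially the same route as the paper's first strategy (Theorem~\ref{th_const_above} specialized to the intrinsic case $P=N$): transplant the radial eigenfunction $g_\infty$ supplied by Theorem~A, use the Laplacian comparison for $r$ under the upper curvature bound together with $g_\infty'\leq 0$, and conclude via Barta's inequality. The paper also records a second, independent route through the McDonald--Meyers characterization \eqref{eqMcMexpress} combined with the moment-spectrum inequalities of \cite{HMP2}, which is the alternative you mention in your final paragraph.
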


As already mentioned we observe in Section \ref{comparisons}, that this result can also be obtained via the description given in \cite{McM} of the first Dirichlet eigenvalue of a domain $D$ in a
Riemannian manifold in terms of its exit time moment spectrum, i.e. Theorem \ref{thmMcDonaldExpress}, in combination with the isoperimetric type inequalities for the exit time moment spectra established in \cite{HMP2}. In the
 works \cite{Mc, Mc2} McDonald considers a complete Riemannian manifold $N$ which satisfies the following {\em moment comparison condition with respect to a constant curvature space form $\kan$}: For every smooth bounded precompact
 domain $D \subset N$ and for all $ k \in \ene$, assume that $\mathcal{A}_k(D) \leq \mathcal{A}_k(B^{b,n}_{T})$, where $B^{b,n}_{T}$ is a geodesic ball in $\kan$ such that $\Vol(D)=\Vol(B^{b,n}_T)$. Then
 $\lambda_1(D) \geq \lambda_1(B^{b,n}_T)$.

Using the same two-ways strategies  we also obtain  \emph{upper bounds} for the first Dirichlet eigenvalue of intrinsic balls as follows:

\begin{theoremA}\label{chintrinsicbelow}
Let $B^N_R$ be a geodesic ball of a complete Riemannian manifold
$N^n$ with a pole $p$ and suppose that the $p$-radial sectional
curvatures of $N^n$ are bounded from below by the $p_w$-radial
sectional curvatures of a $w$-model space $M^n_w$.  Then
\begin{equation}\label{ineqleq_intrinsic}
 \lambda_1(B^N_R) \leq \lambda_1 (B^w_{R}),
 \end{equation}
where $B^{w}_{R}$ is the geodesic ball in $M^n_w$.
\end{theoremA}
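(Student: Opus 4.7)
The plan is to apply Barta's inequality on $B^N_R$ using as test function the radial transplant of the explicit first Dirichlet eigenfunction on $B^w_R$ produced by Theorem~A. The curvature hypothesis (sectional curvatures of $N$ bounded \emph{below} by those of $M^n_w$) gives, via the Hessian/Laplacian comparison theorem, the reverse Laplacian comparison $\Delta^N r \leq \Delta^{M^n_w} r$, and when this is combined with the radial monotonicity of the model eigenfunction it produces a pointwise Laplacian estimate sufficient to apply Barta.

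Concretely, I would first set
$$
g_\infty(r) := \lim_{k\to\infty}\frac{u_k(r)}{u_k(0)},
$$
which by Theorem~A is a first Dirichlet eigenfunction of $B^w_R \subset M^n_w$ with eigenvalue $\lambda_1(B^w_R)$. Two properties of $g_\infty$ are essential: $g_\infty>0$ on $[0,R)$ (being a Dirichlet eigenfunction of constant sign), and $g_\infty'(r)\leq 0$ on $[0,R]$. The monotonicity is the one point that needs separate justification; it is inherited from the analogous monotonicity of each iterate $u_k$, which follows inductively from the hierarchy \eqref{eqmoments1} by applying the maximum principle to the radial ODE for $u_k$ on the rotationally symmetric model $M^n_w$. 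I would then define the radial transplant $\phi(x):=g_\infty(r(x))$ on $\overline{B^N_R}$: since $p$ is a pole, $r$ is smooth on $N\setminus\{p\}$, and $\phi$ extends smoothly across $p$ by the standard transplant-regularity of smooth radial functions on a model space. By construction $\phi>0$ on $B^N_R$ and $\phi|_{\partial B^N_R}=0$.

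A chain-rule computation together with subtraction of the eigenvalue equation $g_\infty''+g_\infty'\,\Delta^{M^n_w} r=-\lambda_1(B^w_R)\,g_\infty$ satisfied on the model yields
$$
\Delta^N \phi(x) = -\lambda_1(B^w_R)\,\phi(x) + g_\infty'(r(x))\bigl(\Delta^N r(x)-\Delta^{M^n_w} r(x)\bigr).
$$
By the Hessian/Laplacian comparison the parenthesis is non-positive, and since $g_\infty'(r)\leq 0$ the correction term is non-negative; hence $-\Delta^N \phi/\phi \leq \lambda_1(B^w_R)$ pointwise on $B^N_R$. Multiplying by the positive first Dirichlet eigenfunction $\psi_1^N$ of $B^N_R$, integrating by parts (the boundary terms vanish since $\phi$ and $\psi_1^N$ both vanish on $\partial B^N_R$), and dividing by the positive quantity $\int_{B^N_R}\psi_1^N\phi\,dV_N$, one concludes $\lambda_1(B^N_R)\leq \lambda_1(B^w_R)$, which is the well-known upper Barta inequality.

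The principal obstacle I anticipate is extracting the qualitative monotonicity $g_\infty'\leq 0$ from the bootstrapping description in Theorem~A; once this is available, the transplant, Hessian comparison, and Barta steps follow the classical template. As the authors indicate, an alternative strategy through Theorem~\ref{thmMcDonaldExpress} combined with moment-spectrum comparison inequalities from \cite{HMP2} (with the curvature assumption reversed relative to the proof of Theorem~\ref{chintrinsicabove}) should also deliver the conclusion, but the Barta approach sketched here seems to be the most direct.
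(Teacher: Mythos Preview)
Your proposal is correct and follows essentially the same approach as the paper: transplant the radial eigenfunction $g_\infty$ from Theorem~A to $B^N_R$, use the Laplacian comparison (under the lower curvature bound) together with $g_\infty'\leq 0$ to obtain $-\Delta^N\phi/\phi\leq\lambda_1(B^w_R)$, and conclude via Barta. The paper routes this through the general extrinsic Theorem~\ref{th_const_below} (specialized to $P=N$, $g=1$, $h=0$, so that $C^m_{w,g,h}=M^n_w$ and the stretching is the identity), whereas you write the intrinsic chain-rule identity $\Delta^N\phi=g_\infty''+g_\infty'\,\Delta^N r$ directly; these are the same computation, and the monotonicity $g_\infty'\leq 0$ that you flag as the main point to justify is exactly what the paper establishes via the explicit formula for $\tilde u_k'$ in Proposition~\ref{propW1} and the uniform convergence in Lemma~\ref{C2limit}.
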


We remark that we also consider and prove extrinsic
generalizations of this result using somewhat more elaborate
comparison constellations which will be defined in Subsection
\ref{subsecCompConstel}, see Theorems    \ref{th_const_below} and
\ref{th_const_below2} in Subsections \ref{subsec1} and
\ref{subsec2}.  Moreover, in the intrinsic context given in
Theorems D and E, the equality with the bound is characterized
under some specific condition satisfied by the model space $M^n_w$
which serve as a curvature-controller, (see Theorems
\ref{th_const_below_intrinsic} and \ref{th_const_above_intrinsic}
in Subsection \ref{subsec3}).

\subsection{Outline of the paper}
Section \ref{pre} below is devoted to present all the preliminary concepts and instrumental prerequisites that we need in the following Sections \ref{secEigenMoment} and  \ref{comparisons} and which has
not been introduced already. Theorem A is proved in Section \ref{secEigenMoment}. The general and complete versions of Theorems B, D, and E are then presented and proved in Section \ref{comparisons}.  \\

\section{Preliminaries and Comparison Settings}\label{pre}

We first consider a few conditions and concepts that will be
instrumental for establishing our results in a more general version than presented in the introduction.

\subsection{The extrinsic balls and the curvature bounds} \label{subsecurvature}

We consider a properly immersed $m$-dimensional submanifold $P^m$
in a complete Riemannian manifold $N^n$. Let $p$ denote a point in
$P^m$ and assume that $p$ is a pole of the ambient manifold $N$. We
denote the distance function from $p$ in $N^{n}$ by $r(x) =
\dist_{N}(p, x)$ for all $x \in N$. Since $p$ is a pole there is -
by definition - a unique geodesic from $x$ to $p$ which realizes
the distance $r(x)$. We also denote by $r$ the restriction
$r\vert_P: P^m\longrightarrow \erre_{+} \cup \{0\}$. This
restriction is then called the extrinsic distance function from
$p$ in $P^m$. The corresponding extrinsic metric balls of
(sufficiently large) radius $R$ and center $p$ are denoted by
$D_R(p) \subset P^m$ and defined as any connected component which
contains $p$ of the set:
$$D_{R}(p) = B_{R}(p) \cap P =\{x\in P \,|\, r(x)< R\} \quad ,$$
where $B_{R}(p)$ denotes the geodesic $R$-ball around the pole $p$
in $N^n$. The
 extrinsic ball $D_R(p)$ is a connected domain in $P^m$, with
boundary $\partial D_{R}(p)$. Since $P^{m}$ is assumed to be
unbounded in $N$ we have for every sufficiently large $R$ that
$B_{R}(p) \cap P \neq P$.
\bigskip

We now present the curvature restrictions which constitute the
geometric framework of our investigations.

\begin{definition}
Let $p$ be a point in a Riemannian manifold $M$ and let $x \in
M-\{ p \}$. The sectional curvature $K_{M}(\sigma_{x})$ of the
two-plane $\sigma_{x} \in T_{x}M$ is then called a
\textit{$p$-radial sectional curvature} of $M$ at $x$ if
$\sigma_{x}$ contains the tangent vector to a minimal geodesic
from $p$ to $x$. We denote these curvatures by $K_{p,
M}(\sigma_{x})$.
\end{definition}

Another notion needed to describe our comparison setting is the
idea of {\it radial tangency}. If we denote by $\nabla r$ and
$\nabla^P r$ the
 gradients of
$r$ in $N$ and $P$ respectively, then we have the following basic
relation:
\begin{equation}\label{eq2.1}
\nabla r = \nabla^P r +(\nabla r)^\bot \quad ,
\end{equation}
where $(\nabla r)^\bot(q)$ is perpendicular to $T_qP$ for all $q\in P$.\\

When the submanifold $P$ is totally geodesic, then $\nabla
r=\nabla^P r$ in all points, and, hence, $\Vert \nabla^P r\Vert
=1$. On the other hand, and given the starting point $p \in P$,
from which we are measuring the distance $r$, we know that $\nabla
r(p)=\nabla^P r(p)$, so $\Vert \nabla^P r(p)\Vert =1$. Therefore,
the difference  $1 - \Vert \nabla^P r\Vert$ quantifies the radial
{\em detour} of the submanifold with respect the ambient manifold
as seen from the pole $p$. To control this detour locally, we
apply the following

\begin{definition}

 We say that the submanifold $P$ satisfies a {\it radial tangency
 condition} at $p\in P$ when we have a smooth positive function
 $$g: P \mapsto \erre_{+} \,\, ,$$ so that
\begin{equation}
\mathcal{T}(x) \, = \, \Vert \nabla^P r(x)\Vert \geq g(r(x)) \, >
\, 0  \quad {\textrm{for all}} \quad x \in P \,\, .
\end{equation}
\end{definition}
\begin{remark}

Of course, we always have

\begin{equation}
\mathcal{T}(x) \, =\, \Vert \nabla^P r(x)\Vert \leq1 \quad
{\textrm{for all}} \quad x \in P \,\, .
\end{equation}
\end{remark}

\subsection{Model Spaces} \label{secModel}

As mentioned previously, the model spaces $M^m_w$ serve foremost
as com\-pa\-ri\-son controllers for the radial sectional
curvatures of $N^{n}$.

\begin{definition}\label{model}[See \cite{Gri}, \cite{GreW}, \cite{Pe}]
 A $w-$model $M_{w}^{m}$ is a
smooth warped product with base $B^{1} = [\,0,\, R[ \,\,\subset\,
\mathbb{R}$ (where $\, 0 < R \leq \infty$\,), fiber $F^{m-1} =
S^{m-1}_{1}$ (i.e. the unit $(m-1)-$sphere with standard metric),
and warping function $w:\, [\,0, \,R[\, \to \mathbb{R}_{+}\cup
\{0\}\,$ with $w(0) = 0$, $w'(0) = 1$, $w^{(k)}(0) = 0$ for all even derivation orders $k$ and $w(r) > 0\,$ for all
$\, r > 0\,$. The point $p_{w} = \pi^{-1}(0)$, where $\pi$ denotes
the projection onto $B^1$, is called the {\em{center point}} of
the model space. If $R = \infty$, then $p_{w}$ is a pole of
$M_{w}^{m}$.
\end{definition}

\begin{remark}\label{propSpaceForm}
The simply connected space forms $\kam$ of constant curvature $b$
can be constructed as  $w-$models with any given point as center
point using the warping functions
\begin{equation}
w(r) = w_{b}(r) =\begin{cases} \frac{1}{\sqrt{b}}\sin(\sqrt{b}\, r) &\text{if $b>0$}\\
\phantom{\frac{1}{\sqrt{b}}} r &\text{if $b=0$}\\
\frac{1}{\sqrt{-b}}\sinh(\sqrt{-b}\,r) &\text{if $b<0$} \quad .
\end{cases}
\end{equation}
Note that for $b > 0$ the function $w_{b}(r)$ admits a smooth
extension to  $r = \pi/\sqrt{b}$. For $\, b \leq 0\,$ any center
point is a pole.
\end{remark}

In the papers \cite{O'N,GreW,Gri,MP3,MP4}, we have a complete
description of these model spaces, including the computation of
their sectional curvatures $K_{p_{w} , M_{w}}$ in the radial
directions from the center point. They are determined by the
radial function $K_{p_{w} , M_{w}}(\sigma_{x}) \, = \, K_{w}(r) \,
= \, -\frac{w''(r)}{w(r)}$. Moreover,
 the mean curvature of the distance sphere of radius $r$ from the center point is
\begin{equation}\label{eqWarpMean}
\eta_{w}(r)  = \frac{w'(r)}{w(r)} = \frac{d}{dr}\ln(w(r))\quad .
\end{equation}

\subsection{The Isoperimetric Comparison Space}

\label{secIsopCompSpace} Given the bounding functions $g(r)$,
$h(r)$ and the ambient curvature controller function $w(r)$
described is Subsections \ref{subsecurvature} and \ref{secModel},
as in \cite{MP5,HMP} we construct a new model space $C^{\,m}_{w,
g, h}\,$. For completeness, we recall this construction:

\begin{definition}\label{stretchingfunct}
Given a smooth positive function
$$g: P \mapsto \erre_{+} \,\, ,$$
satisfying $g(0)=1$ and $g(r(x))\leq 1\,\,{\textrm{for all \,}} x
\in P$, a 'stretching' function $s$ is defined as follows
\begin{equation}\label{eqstretching}
s(r) \, = \, \int_{0}^{r}\,\frac{1}{g(t)} \, dt \quad .
\end{equation}
It  has a well-defined inverse $r(s)$ for $s \in [\,0, s(R)\,]$
with derivative $r'(s) \, = \, g(r(s))$. In particular $r'(0)\, =
\, g(0) \, = \, 1$.
\end{definition}

\begin{definition}[\cite{MP5}] \label{defCspace}
The {\em{isoperimetric comparison space}} $C^{\,m}_{w, g, h}\,$ is
the $W-$model space with base interval $B\,= \, [\,0, s(R)\,]$ and
warping function $W(s)$ defined by
\begin{equation}\label{defW}
W(s) \, = \, \Lambda^{\frac{1}{m-1}}(r(s)) \quad ,
\end{equation}
where the auxiliary function $\Lambda(r)$ satisfies the following
differential equation:
\begin{equation} \label{eqLambdaDiffeq}
\begin{aligned}
\frac{d}{dr}\,\{\Lambda(r)w(r)g(r)\} \, &= \, \Lambda(r)w(r)g(r)\left(\frac{m}{g^{2}(r)}\left(\eta_{w}(r) - h(r) \right)\right) \\
&= \, m\,\frac{\Lambda(r)}{g(r)}\left(w'(r) - h(r)w(r)
\right)\quad,
\end{aligned}
\end{equation}
and the following boundary condition:
\begin{equation} \label{eqTR}
\frac{d}{dr}_{|_{r=0}}\left(\Lambda^{\frac{1}{m-1}}(r)\right) = 1
\quad .
\end{equation}

\end {definition}

We observe, that in spite of its relatively complicated
construction, $C^{\,m}_{w, g, h}\,$ is indeed a model space
$M^m_W$ with a well defined pole $p_{W}$ at $s = 0$: $W(s) \geq 0$
for all $s$ and $W(s)$ is only $0$ at $s=0$, where also, because
of the explicit construction in definition \ref{defCspace} and
because of
 equation (\ref{eqTR}):  $W'(0)\, =
\, 1\,$, and moreover, $W^{(k)}(0)=0$ for all even $k$, (see \cite{MP5}).\\


Note that, when $g(r)=1 \,\,\,{\textrm{for all \,}} r$ and
$h(r)=0\,\,\,{\textrm{for all \,\,}} r$, then the stretching
function $s(r)=r$ and $W(s(r))=w(r) \,\,\,{\textrm{for all \,}}
r$, so $C_{w,g,h}^{m}$ becomes
a model space with warping function $w$, $M^m_w$.\\

 These are the spaces where the bounds on the $L^1$-moment spectrum are attained. We shall refer
to the $W$-model spaces $M^{m}_{W} = C_{w,g,h}^{m}$ as the {\em
isoperimetric comparison spaces} of dimension $m$ defined by the radial functions $w$, $g$, and $h$.

\subsection{Balance conditions}

In the paper \cite{HMP} we imposed a balance condition on the
general model spaces $M^m_W$, that we will need in the sequel:

\begin{definition} \label{defBalCond}
The model space $M_{W}^{m} \, = \, C_{w, g, h}^{m}$ is
{\em{$w-$balanced from below}} (with respect to the intermediary
model space $M_{w}^{m}$) if the following holds for all $r \in \,
[\,0, R\,]$, resp. all $s \in \, [\,0, s(R)\,]$:
\begin{equation}\label{eqBalA}
q_{W}(s)\left(\eta_{w}(r(s)) - h(r(s)) \right) \, \geq g(r(s))/m
\quad .
\end{equation}
Here $q_{W}(s)$ is the isoperimetric quotient function
\begin{equation} \label{eqIsopFunction}
\begin{aligned}
q_{W}(s) \, &= \, \frac{\Vol(B_{s}^{W})}{\Vol(S_{s}^{W})} \,
\\ &= \, \frac{\int_{0}^{s}\,W^{m-1}(t)\,dt}{W^{m-1}(s)}\,
\\ &= \,
\frac{\int_{0}^{r(s)}\,\frac{\Lambda(u)}{g(u)}\,du}{\Lambda(r(s))}
\quad .
\end{aligned}
\end{equation}

\end{definition}

\begin{remark}
 In particular the $w$-balance condition from below  for $M_{W}^{m} \, = \, C_{w, g, h}^{m}$ implies that
\begin{equation} \label{eqEtaVSh}
\eta_{w}(r) \, - h(r) \, > \, 0 \quad .
\end{equation}
\end{remark}

\begin{remark}
The  definition of $w-$balance condition from below for
$M_{W}^{m}$ is clearly an extension of the balance condition from
below as defined in \cite[Definition 2.12]{MP4}. The condition in
that paper is obtained precisely when $g(r) \, = \, 1$ and $h(r)
\, = \, 0$ for all $r \in [\,0, R]\,$ so that $r(s) \, =\, s$,
$W(s)\, = \, w(r)$, and
\begin{equation}
q_{w}(r)\eta_{w}(r)\, \geq 1/m \quad .
\end{equation}
In particular, the Hyperbolic spaces $\hh^n(b)$ are balanced from below.
\end{remark}

\subsection{Comparison Constellations} \label{subsecCompConstel}

We now present the precise settings where our main results take
place, introducing the notion of {\em comparison constellations}
as it was previously defined in \cite{HMP}. For that purpose we
shall bound the previously introduced notions of radial curvature
and tangency by the corresponding quantities attained in some
special model spaces, called {\em isoperimetric comparison spaces}
to be defined in the next subsection.

\begin{definition}\label{defConstellatNew1}
Let $N^{n}$ denote a complete Riemannian manifold with a pole $p$
and distance function $r \, = \, r(x) \, = \, \dist_{N}(p, x)$.
Let $P^{m}$ denote an unbounded complete and closed submanifold in
$N^{n}$. Suppose $p \in P^m$ and suppose that the following
conditions are satisfied for all $x \in P^{m}$ with $r(x) \in
[\,0, R]\,$:
\begin{enumerate}
\item The $p$-radial sectional curvatures of $N$ are bounded from
below by the $p_{w}$-radial sectional curvatures of of the
$w-$model space $M_{w}^{m}$:
$$
\mathcal{K}(\sigma_{x}) \, \geq \, -\frac{w''(r(x))}{w(r(x))}
\quad .
$$

\item The $p$-radial mean curvature of $P$ is bounded from below
by a smooth radial function $h(r)$:
$$
\mathcal{C}(x)  \geq h(r(x)) \quad.
$$

\item The submanifold $P$ satisfies a {\it radial tangency
 condition} at $p\in P$, with smooth positive function $g$ i.e. we have a smooth positive function
 $$g: P \mapsto \erre_{+} \,\, ,$$ such that
\begin{equation}
\mathcal{T}(x) \, = \, \Vert \nabla^P r(x)\Vert \geq g(r(x)) \, >
\, 0  \quad {\textrm{for all}} \quad x \in P \,\, .
\end{equation}
\end{enumerate}
Let $C_{w,g,h}^{m}$ denote the $W$-model with the specific warping
function $W: \pi(C_{w,g,h}^{m}) \to \mathbb{R}_{+}$  constructed
in Definition \ref{defCspace}, (Subsection
\ref{secIsopCompSpace}), via $w$, $g$, and $h$. Then the triple
$\{ N^{n}, P^{m}, C_{w,g,h}^{m} \}$ is called an
{\em{isoperimetric comparison constellation bounded from below}}
on the interval $[\,0, R]\,$.
\end{definition}

A \lq\lq constellation bounded from above" is given by the
following dual setting, (with respect to the definition above),
considering the special $W$-model spaces $C_{w,g,h}^{m}$ with
$g=1$:

\begin{definition}\label{defConstellatNew2}
Let $N^{n}$ denote a Riemannian manifold with a pole $p$ and
distance function $r \, = \, r(x) \, = \, \dist_{N}(p, x)$. Let
$P^{m}$ denote an unbounded complete and closed submanifold in
$N^{n}$. Suppose the following  conditions are satisfied for all
$x \in P^{m}$ with $r(x) \in [\,0, R]\,$:
\begin{enumerate}
\item The $p$-radial sectional curvatures of $N$ are bounded from
above by the $p_{w}$-radial sectional curvatures of the $w-$model
space $M_{w}^{m}$:
$$
\mathcal{K}(\sigma_{x}) \, \leq \, -\frac{w''(r(x))}{w(r(x))}
\quad .
$$

\item The $p$-radial mean curvature of $P$ is bounded from above
by a smooth radial function $h(r)$:
$$
\mathcal{C}(x)  \leq h(r(x)) \quad.
$$
\end{enumerate}

Let $C_{w,1,h}^{m}$ denote the $W$-model with the specific warping
function $W: \pi(C_{w,1,h}^{m}) \to \mathbb{R}_{+}$ constructed,
(in the same way as in Definition \ref{defConstellatNew1} above),
in Definition \ref{defCspace} via $w$, $g=1$, and $h$. Then the
triple $\{ N^{n}, P^{m}, C_{w,1,h}^{m} \}$ is called an
{\em{isoperimetric comparison constellation bounded from above}}
on the interval $[\,0, R]\,$.
\end{definition}

\subsection{Laplacian Comparison}
\label{secLaplacecompar}

 We begin this section recalling the following
Laplacian comparison Theorem for manifolds with a pole (see
\cite{GreW,JK,MP3,MP4,MP5,MM} for more details).
\begin{theorem} \label{corLapComp} Let $N^{n}$ be a manifold with a pole $p$, let $M_{w}^{m}$ denote a
$w-$model space with center $p_{w}$. Then we have the following dual Laplacian inequalities for modified distance functions:\\

(i) Suppose that every $p$-radial sectional curvature at $x \in N
- \{p\}$ is bounded  by the $p_{w}$-radial sectional curvatures in
$M_{w}^{m}$ as follows:
\begin{equation}
\mathcal{K}(\sigma(x)) \, = \, K_{p, N}(\sigma_{x})
\geq-\frac{w''(r)}{w(r)}\quad .
\end{equation}

Then we have for every smooth function $f(r)$ with $f'(r) \leq
0\,\,\textrm{for all}\,\,\, r$, (respectively $f'(r) \geq
0\,\,\textrm{for all}\,\,\, r$):
\begin{equation} \label{eqLap1}
\begin{aligned}
\Delta^{P}(f \circ r) \, \geq (\leq) \, &\left(\, f''(r) -
f'(r)\eta_{w}(r) \, \right)
 \Vert \nabla^{P} r \Vert^{2} \\ &+ mf'(r) \left(\, \eta_{w}(r) +
\langle \, \nabla^{N}r, \, H_{P}  \, \rangle  \, \right)  \quad ,
\end{aligned}
\end{equation}
where $H_{P}$ denotes the mean curvature vector
of $P$ in $N$.\\

(ii) Suppose that every $p$-radial sectional curvature at $x \in N
- \{p\}$ is bounded  by the $p_{w}$-radial sectional curvatures in
$M_{w}^{m}$ as follows:
\begin{equation}
\mathcal{K}(\sigma(x)) \, = \, K_{p, N}(\sigma_{x})
\leq-\frac{w''(r)}{w(r)}\quad .
\end{equation}

Then we have for every smooth function $f(r)$ with $f'(r) \leq
0\,\,\textrm{for all}\,\,\, r$, (respectively $f'(r) \geq
0\,\,\textrm{for all}\,\,\, r$):
\begin{equation} \label{eqLap2}
\begin{aligned}
\Delta^{P}(f \circ r) \, \leq (\geq) \, &\left(\, f''(r) -
f'(r)\eta_{w}(r) \, \right)
 \Vert \nabla^{P} r \Vert^{2} \\ &+ mf'(r) \left(\, \eta_{w}(r) +
\langle \, \nabla^{N}r, \, H_{P}  \, \rangle  \, \right)  \quad ,
\end{aligned}
\end{equation}
where $H_{P}$ denotes the mean curvature vector of $P$ in $N$.
\end{theorem}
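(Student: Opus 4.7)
The plan is to combine three classical ingredients: the chain rule for the Laplacian on the submanifold, the relation between the intrinsic Laplacian $\Delta^P r$ and the ambient Hessian together with the mean curvature vector $H_P$, and the Hessian comparison theorem under radial sectional curvature bounds. The curvature hypothesis enters only through the last of these, so both parts (i) and (ii) follow from the same computation with opposite inequalities.

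First I would write the chain rule identity
\begin{equation*}
\Delta^{P}(f \circ r) \, = \, f''(r)\,\Vert \nabla^{P} r \Vert^{2} + f'(r)\, \Delta^{P} r \quad ,
\end{equation*}
and then expand $\Delta^{P} r$ using the standard formula relating the Laplacian on $P$ of the restriction of a function on $N$ to its ambient Hessian plus a mean-curvature term: picking a local orthonormal frame $\{e_{i}\}_{i=1}^{m}$ of $T_{x}P$,
\begin{equation*}
\Delta^{P} r \, = \, \sum_{i=1}^{m} \operatorname{Hess}^{N} r(e_{i}, e_{i}) \, + \, m\,\langle \nabla^{N} r, H_{P} \rangle \quad .
\end{equation*}

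Next I would invoke the Hessian comparison theorem for manifolds with a pole: under the radial curvature bound $\mathcal{K}(\sigma(x)) \geq -w''(r)/w(r)$ of part (i), one has, for every vector $X \in T_{x}N$,
\begin{equation*}
\operatorname{Hess}^{N} r (X,X) \, \leq \, \eta_{w}(r)\bigl( \Vert X \Vert^{2} - \langle X, \nabla^{N} r\rangle^{2}\bigr)\quad ,
\end{equation*}
because the radial direction is a null direction of the Hessian of $r$ and the transverse behaviour is controlled by a Jacobi/Riccati comparison against the warped product $M^{m}_{w}$. Tracing over $\{e_{i}\}$ and using that the tangential part of $\nabla^{N} r$ along $P$ is precisely $\nabla^{P} r$, I obtain
\begin{equation*}
\sum_{i=1}^{m} \operatorname{Hess}^{N} r(e_{i}, e_{i}) \, \leq \, \eta_{w}(r)\bigl( m - \Vert \nabla^{P} r \Vert^{2}\bigr) \quad .
\end{equation*}
Under the upper curvature bound of part (ii) the Hessian comparison gives the same identity with the reverse inequality, by exactly the same argument.

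Finally I would substitute this estimate into the expression for $\Delta^{P}(f \circ r)$ and keep track of the sign of $f'$. In part (i), multiplying $f'(r) \bigl(\sum_{i} \operatorname{Hess}^{N} r(e_{i},e_{i})\bigr) \leq f'(r)\,\eta_{w}(r)(m - \Vert \nabla^{P} r\Vert^{2})$ by a nonnegative $f'$ preserves the inequality and yields the $\leq$ version of \eqref{eqLap1}, while a nonpositive $f'$ reverses it and yields the $\geq$ version; rearranging
\begin{equation*}
m\, f'(r)\,\eta_{w}(r) - f'(r)\,\eta_{w}(r)\,\Vert \nabla^{P} r\Vert^{2}\, = \, -f'(r)\,\eta_{w}(r)\,\Vert \nabla^{P} r\Vert^{2} + m\, f'(r)\,\eta_{w}(r)
\end{equation*}
and collecting terms produces exactly the right-hand side of \eqref{eqLap1}. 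Part (ii) is the formally identical argument after flipping the Hessian inequality, leading to \eqref{eqLap2}.

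The main obstacle, or at any rate the only non-bookkeeping step, is the Hessian comparison estimate on the ambient $N$; this is a classical application of the Jacobi/Riccati comparison for the pole $p$ (as developed in Greene--Wu and Jorge--Koutroufiotis, and reviewed in the references cited just before the theorem). Everything else is algebraic manipulation combined with the decomposition $\nabla^{N} r = \nabla^{P} r + (\nabla r)^{\bot}$.
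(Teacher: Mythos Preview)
Your sketch is correct and follows exactly the standard route: chain rule for $\Delta^{P}(f\circ r)$, the identity $\Delta^{P}r=\sum_i\operatorname{Hess}^{N}r(e_i,e_i)+m\langle\nabla^{N}r,H_P\rangle$, and the Rauch/Hessian comparison $\operatorname{Hess}^{N}r(X,X)\lessgtr\eta_w(r)(\Vert X\Vert^2-\langle X,\nabla^{N}r\rangle^2)$ traced over a tangent frame of $P$. Note, however, that the paper does not actually give a proof of this theorem; it is stated as a recalled result with references to \cite{GreW,JK,MP3,MP4,MP5,MM}, and the argument you outline is precisely the one found in those sources (in particular the authors' own earlier papers \cite{MP3,MP4,MP5}).
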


\section{The first eigenvalue and moment spectra \\ of geodesic balls in model spaces} \label{secEigenMoment}
The aim of this section is to obtain the first eigenvalue of a
geodesic ball $B^w_R$, namely $\lambda_1 (B^w_R)$, in terms of the
$L^1$-moment spectrum of  $B^w_R$.

To do that, we divide this section in two subsections. In the first, we shall provide a precise integral description of the $L^1$-moment spectrum of  $B^w_R$ as previously also presented in \cite{HMP}). In the second we shall relate the first Dirichlet eigenvalue of the geodesic balls with its moment spectrum using a bootstrapping technique for the Green operator due to S. Sato, \cite{Sa}.
\subsection{The moment
spectrum of a geodesic model $R$-ball} \label{SpectrumBall}

We have the following result concerning the $L^1$-moment spectrum
of a geodesic $R$-ball $B^w_R \subset M^m_w$, see \cite{HMP} for its proofs:

\begin{proposition}\label{propW1} Let $\tilde{u}_k$ be the solution of the  boundary value problems \eqref{eqmoments1},
defined on the geodesic $R$-ball $B^w_R$ in a warped model space
$M^m_w$.

Then
\begin{equation}\label{eq_ukint}
\tilde{u}_k(r)=k\,\int_r^R \frac{\int_0^t w^{m-1}(s)
\tilde{u}_{k-1}(s)\, ds}{w^{m-1}(t)}\,dt,
\end{equation}
and
\begin{equation}\label{eq_uk}
\tilde{u}_k'(r) \, = -k\,\frac{\int_0^r w^{m-1}(s)
\tilde{u}_{k-1}(s)\, ds}{w^{m-1}(r)}.
\end{equation}
Therefore, applying the Divergence Theorem
\begin{equation}\label{momentsW}
\mathcal{A}_k(B^w_R)=-\frac{1}{k+1}\, \tilde{u}_{k+1}'(R)
\,\Vol(S^w_R),
\end{equation}
where $S^w_R$ is the geodesic $R$-sphere in $M^m_w$ .
\end{proposition}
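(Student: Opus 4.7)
The plan breaks naturally into three pieces: reduce to a radial ODE, integrate it twice, and extract the moment formula from the divergence theorem.

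First I would show, by induction on $k$, that each $\tilde u_k$ is a radial function of $r$. The base case $\tilde u_0 \equiv 1$ is trivial. For $k \geq 1$ the boundary value problem \eqref{eqmoments1} has a unique smooth solution; the isotropy group $O(m)$ acts isometrically on $M_w^m$ fixing the pole $p_w$, preserves $B_R^w$, and by the inductive hypothesis also preserves $\tilde u_{k-1}$. Hence the whole problem is $O(m)$-invariant, and uniqueness forces $\tilde u_k$ to be $O(m)$-invariant, i.e.\ radial.

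Next, on $M_w^m$ the Laplacian of a radial function takes the divergence form
\begin{equation*}
\Delta f(r) \, = \, \frac{1}{w^{m-1}(r)}\bigl(w^{m-1}(r)\,f'(r)\bigr)' \quad ,
\end{equation*}
so the PDE in \eqref{eqmoments1} becomes the ODE
\begin{equation*}
\bigl(w^{m-1}(r)\,\tilde u_k'(r)\bigr)' \, = \, -k\,w^{m-1}(r)\,\tilde u_{k-1}(r) \quad .
\end{equation*}
Integrating from $0$ to $r$ yields \eqref{eq_uk}, provided the constant of integration vanishes; this is the one nontrivial point, and it follows from the smoothness of $\tilde u_k$ at the pole together with $w(0)=0$, $w'(0)=1$ from Definition \ref{model} (indeed L'H\^{o}pital applied to the right-hand side of \eqref{eq_uk} at $r=0$ gives $\tilde u_k'(0)=0$, as required by smoothness across $p_w$). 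A second integration from $r$ to $R$, using the Dirichlet condition $\tilde u_k(R)=0$, then produces \eqref{eq_ukint}.

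Finally, for \eqref{momentsW} I would apply the divergence theorem to $\tilde u_{k+1}$ on $B_R^w$ and use $\Delta \tilde u_{k+1} = -(k+1)\,\tilde u_k$:
\begin{equation*}
-(k+1)\,\mathcal{A}_k(B_R^w) \, = \, \int_{B_R^w}\Delta \tilde u_{k+1}\,dV \, = \, \int_{\partial B_R^w}\tilde u_{k+1}'(R)\,dS \, = \, \tilde u_{k+1}'(R)\,\Vol(S_R^w) \quad ,
\end{equation*}
where the second equality uses that $\tilde u_{k+1}'(R)$ is a constant along the geodesic sphere (by radiality). Rearranging gives \eqref{momentsW}. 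The computation is essentially linear and routine; I expect no serious obstacle beyond the smoothness-at-the-pole subtlety noted above.
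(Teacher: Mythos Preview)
Your argument is correct and complete. The paper itself does not give a proof of this proposition, deferring instead to \cite{HMP}; your approach---radiality by $O(m)$-invariance and uniqueness, reduction to the divergence-form radial ODE, two integrations with the smoothness condition at the pole killing the constant, and the divergence theorem for the moment identity---is exactly the standard one and is what one would expect to find in the cited reference.
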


Let us consider now $M^m_W$ be an isoperimetric comparison model
space and let $\tilde{u}^W_k$ be the radial functions given by
\eqref{eq_uk}, which are the solutions of the problems
\eqref{eqmoments1} defined on the geodesic ball $B^W_{s(R)}$. We
define the functions $f_k: [\,0,R]\rightarrow \mathbb{R}$ as
$f_k=\tilde{u}^W_k\circ s$, where $s$ is the stretching function
given by \eqref{eqstretching}. Then, we have the following
\begin{lemma}\label{paren} Let $M^m_W$ be an  isoperimetric comparison model
space that is $w$-balanced from below in the sense of Definition
\ref{defBalCond}. Then for all $k\geq 1$,
\begin{displaymath}
f_k''(r)-f_k'(r)\eta_w(r)\geq 0.
\end{displaymath}
\end{lemma}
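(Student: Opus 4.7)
The plan is to work with the explicit integral formula \eqref{eq_uk} for $\tilde{u}^W_k$ on $M^m_W$, convert it into a statement about $f_k$ via the change of variable $t=s(u)$, and then reduce the desired inequality to a consequence of the $w$-balance condition from below \eqref{eqBalA} combined with the monotonicity of the $f_j$'s. The argument is inductive in $k$, with base case $f_0\equiv 1$ providing the monotonicity needed at $k=1$ trivially.

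The first step is to rewrite $f_k'=((\tilde{u}^W_k)'\circ s)\cdot s'$. Applying \eqref{eq_uk} on $M^m_W$ and changing variables via $t=s(u)$ (so $dt=du/g(u)$ and $W^{m-1}(s(u))=\Lambda(r(s(u)))=\Lambda(u)$) gives
\begin{equation*}
f_k'(r) \, = \, -\frac{k}{\Lambda(r)\,g(r)}\int_0^r \frac{\Lambda(u)}{g(u)}\,f_{k-1}(u)\,du \quad .
\end{equation*}
In particular $f_k'\leq 0$ whenever $f_{k-1}\geq 0$, so by induction every $f_j$ is non-negative and non-increasing on $[0,R]$. Writing this identity as $\Lambda(r)\,g(r)\,f_k'(r)=-k\,I_k(r)$ with $I_k(r)=\int_0^r \Lambda(u)f_{k-1}(u)/g(u)\,du$ and differentiating once more produces
\begin{equation*}
f_k''(r) \, = \, -\frac{k\,f_{k-1}(r)}{g^2(r)} \, - \, \frac{(\Lambda g)'(r)}{\Lambda(r)\,g(r)}\,f_k'(r) \quad .
\end{equation*}

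At this point the ODE \eqref{eqLambdaDiffeq} enters: dividing both sides by $\Lambda\,w\,g$ rearranges to
\begin{equation*}
\frac{(\Lambda g)'(r)}{\Lambda(r)\,g(r)} \, = \, \frac{m\,(\eta_w(r)-h(r))}{g^2(r)} \, - \, \eta_w(r) \quad ,
\end{equation*}
so that inserting this into the previous display the $\eta_w\,f_k'$-term cancels, and, after reintroducing $f_k'=-kI_k/(\Lambda g)$, the quantity of interest becomes
\begin{equation*}
f_k''(r) - f_k'(r)\,\eta_w(r) \, = \, -\frac{k\,f_{k-1}(r)}{g^2(r)} \, + \, \frac{k\,m\,(\eta_w(r)-h(r))}{g^3(r)}\cdot\frac{I_k(r)}{\Lambda(r)} \quad .
\end{equation*}

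The claim thus reduces to showing $m(\eta_w-h)\,I_k/(g\,\Lambda)\geq f_{k-1}$. Since $f_{k-1}$ is non-increasing on $[0,R]$, one has $I_k(r)\geq f_{k-1}(r)\int_0^r\Lambda(u)/g(u)\,du$, and hence by \eqref{eqIsopFunction} $I_k(r)/\Lambda(r)\geq q_W(s(r))\,f_{k-1}(r)$. Invoking the $w$-balance condition \eqref{eqBalA} in the equivalent form $q_W(s)\,m(\eta_w-h)/g\geq 1$ then closes the argument. The main obstacle I foresee is purely bookkeeping: keeping track of the two radial variables $r$ on $M^m_w$ and $s$ on $M^m_W$ connected through the stretching function, and correctly trading the logarithmic derivative of $\Lambda g$ for the radial datum $\eta_w$ by means of \eqref{eqLambdaDiffeq}; once those substitutions are carried out, the positivity falls out directly from the monotonicity of $f_{k-1}$ together with the balance condition.
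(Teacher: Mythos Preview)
Your argument is correct. The paper does not actually supply a proof of this lemma; it is stated without proof in Subsection~\ref{SpectrumBall} and the surrounding results are referred to \cite{HMP}. Your approach is the natural one and matches what one expects from \cite{HMP}: pull back the explicit formula \eqref{eq_uk} through the stretching function to obtain
\[
\Lambda(r)g(r)f_k'(r) = -k\int_0^r \frac{\Lambda(u)}{g(u)}\,f_{k-1}(u)\,du,
\]
differentiate once more, use the defining ODE \eqref{eqLambdaDiffeq} in the logarithmic form $(\Lambda g)'/(\Lambda g)=m(\eta_w-h)/g^2-\eta_w$ to make the $\eta_w f_k'$ terms cancel, and finish with the monotonicity of $f_{k-1}$ together with the balance condition \eqref{eqBalA} and the identification \eqref{eqIsopFunction} of $q_W(s(r))$. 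One small point worth making explicit in the induction on the sign and monotonicity of $f_j$: the step ``$f_j'\leq 0$ implies $f_j\geq 0$'' uses the boundary condition $f_j(R)=\tilde{u}^W_j(s(R))=0$ (or, equivalently, the positivity visible directly from \eqref{eq_ukint}); with that noted, the proof is complete.
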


\subsection{The first Dirichlet eigenvalue of geodesic balls in the model spaces $M^m_w$}
Since $M^m_w$ is a spherically symmetric manifold with pole $p$, the Green operator for radial
functions on $B^w_R(p)$ is given by

\begin{equation} \label{eqGreen}
G(u)(r)=\int_r^R \frac{\int_0^t w^{m-1}(s) u(s)\,
ds}{w^{m-1}(t)}\,dt.
\end{equation}

Note that the moment functions $\tilde{u}_k$ given by
\eqref{eqmoments1}, can be written via the Green operator as
$\tilde{u}_k(r)=k\,G(\tilde{u}_{k-1})(r)$. Using these functions,
we can prove the following estimate
\begin{proposition}\label{inequalitylamba1}
Let $\tilde{u}_k$ be the radial functions defined on $D = B^w_R(p)$ and
given by \eqref{eqmoments1}. Then, for all $k\geq 1$, the functions $\frac{k\,
\tilde{u}_{k-1}}{\tilde{u}_{k}}(r)$ are increasing and we have

\begin{eqnarray}\label{lamba1_emparedado}
\frac{k\, \tilde{u}_{k-1}}{\tilde{u}_ k}(0)\leq
\lambda_1(B^w_R)\leq \frac{k\,
\mathcal{A}_{k-1}(B^w_R)}{\mathcal{A}_k(B^w_R)},
\end{eqnarray}
where $\mathcal{A}_k(B^w_R)$ is the $k$-moment of $B^w_R$.
\end{proposition}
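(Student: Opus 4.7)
The plan is to prove the two inequalities by different routes: the upper bound via a spectral expansion of the moment spectrum, and the lower bound via Barta's inequality combined with a monotonicity property of the radial ratio $\rho_k(r):=k\tilde{u}_{k-1}(r)/\tilde{u}_k(r)$. A preliminary step is the strict positivity $\tilde{u}_k>0$ on $B_R^w$ for every $k\geq 1$, which follows inductively from $\tilde{u}_0=1$ and the strong maximum principle applied to $-\Delta\tilde{u}_k=k\tilde{u}_{k-1}\geq 0$ with vanishing Dirichlet data.

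\emph{Upper bound.} Expand $\tilde{u}_k=\sum_{i\geq 1}a_i^{(k)}\phi_i$ in the $L^2$-orthonormal Dirichlet eigenbasis $\{\phi_i\}$ of $B_R^w$ with eigenvalues $0<\lambda_1\leq \lambda_2\leq\cdots$. The recursion $-\Delta\tilde{u}_k=k\tilde{u}_{k-1}$ forces $a_i^{(k)}=(k/\lambda_i)\,a_i^{(k-1)}$, and starting from $\tilde{u}_0=1$ with $a_i^{(0)}=c_i:=\int_{B_R^w}\phi_i\,dV$ one obtains $a_i^{(k)}=k!\,c_i/\lambda_i^k$. Hence
\[
\mathcal{A}_k(B_R^w)=\int_{B_R^w}\tilde{u}_k\,dV=\sum_i a_i^{(k)}c_i=k!\sum_i c_i^2/\lambda_i^k,
\]
a series with non-negative coefficients, so that
\[
\frac{k\,\mathcal{A}_{k-1}(B_R^w)}{\mathcal{A}_k(B_R^w)}=\frac{\sum_i \omega_i\lambda_i}{\sum_i \omega_i}\qquad\text{with}\qquad \omega_i:=c_i^2/\lambda_i^k\geq 0
\]
is a convex combination of the eigenvalues and therefore at least $\lambda_1$.

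\emph{Lower bound.} Barta's inequality applied to $\tilde{u}_k$ yields $\lambda_1(B_R^w)\geq \inf_{B_R^w}(-\Delta\tilde{u}_k/\tilde{u}_k)=\inf_{B_R^w}\rho_k$, so it suffices to prove that the radial function $\rho_k(r)$ is non-decreasing on $[0,R]$, which forces $\inf\rho_k=\rho_k(0)$. I would establish this by induction on $k$ using the integral formulas \eqref{eq_ukint} and \eqref{eq_uk}. Setting $J_k(t):=\int_0^t w^{m-1}(s)\tilde{u}_{k-1}(s)\,ds$, rewrite
\[
\frac{J_k(t)}{J_{k-1}(t)}=\frac{\int_0^t w^{m-1}\tilde{u}_{k-2}\,(\tilde{u}_{k-1}/\tilde{u}_{k-2})\,ds}{\int_0^t w^{m-1}\tilde{u}_{k-2}\,ds}
\]
as a weighted average of $\tilde{u}_{k-1}/\tilde{u}_{k-2}$ over $[0,t]$; by the induction hypothesis this ratio is decreasing in $s$, and therefore the average is decreasing in $t$. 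A second application of the same averaging principle, now to
\[
\frac{\tilde{u}_k(r)}{\tilde{u}_{k-1}(r)}=\frac{k}{k-1}\cdot\frac{\int_r^R (J_k/J_{k-1})(t)\,J_{k-1}(t)/w^{m-1}(t)\,dt}{\int_r^R J_{k-1}(t)/w^{m-1}(t)\,dt},
\]
shows that $\tilde{u}_k/\tilde{u}_{k-1}$ is decreasing in $r$, i.e.\ $\rho_k$ is increasing. The base case $k=1$ is immediate, since \eqref{eq_uk} gives $\tilde{u}_1'\leq 0$.

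The main obstacle is the monotonicity step: it requires chaining two applications of the elementary averaging principle that $\int_a^b g h\,dt/\int_a^b h\,dt$ is monotone in its endpoint whenever $g$ is monotone and $h>0$, and organizing the induction on $k$ so that the decreasing property of one ratio yields, via $J_k/J_{k-1}$, the decreasing property of the next.
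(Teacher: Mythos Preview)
Your proof is correct. For the lower bound and the monotonicity of $\rho_k$, your approach is essentially the same as the paper's: both use Barta's inequality and establish that $r\mapsto k\tilde u_{k-1}(r)/\tilde u_k(r)$ is increasing by induction on $k$. The paper carries out the base step $k=2$ by a direct computation (which unwinds to precisely your first averaging step) and then invokes Lemma~2 of Sato \cite{Sa} for the inductive passage $G(\tilde u_{k-1})/G^2(\tilde u_{k-1})$; your two applications of the elementary ``weighted average of a monotone function'' principle make this induction fully self-contained, which is a modest gain in transparency.

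The upper bound, however, you prove by a genuinely different route. The paper obtains it from the \emph{upper} Barta inequality: once $\rho_k$ is known to be increasing, $\sup_{B^w_R}\rho_k=\rho_k(R)$, and then L'H\^opital together with \eqref{momentsW} identifies $\rho_k(R)$ with the shifted moment ratio $(k-1)\mathcal A_{k-2}/\mathcal A_{k-1}$. Your argument instead expands $\tilde u_k$ in the Dirichlet eigenbasis, obtains $\mathcal A_k=k!\sum_i c_i^2/\lambda_i^k$, and reads $k\mathcal A_{k-1}/\mathcal A_k$ as a weighted mean of the Dirichlet eigenvalues, hence $\geq\lambda_1$. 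This spectral argument uses nothing about radial symmetry and therefore yields the upper bound for \emph{any} smooth precompact domain, and it is logically independent of the monotonicity result. What the paper's route buys in return is the explicit identification $k\tilde u_{k-1}(R)/\tilde u_k(R)=(k-1)\mathcal A_{k-2}/\mathcal A_{k-1}$, which is reused later (e.g.\ in the proof of Theorem~\ref{lambda1model}) to show that the sequence $\{k\mathcal A_{k-1}/\mathcal A_k\}_k$ is decreasing.
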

\begin{proof}
As $\tilde{u}_k$ is a positive radial $C^2$-function on $B^w_R$
satisfying that $\tilde{u}_k(R)=0$, a direct application of
Barta's inequalities, (see \cite{Bar} and \cite{Cha1}) give us that

\begin{equation}\label{lambda_1Barta}
\inf_{B^w_R} \Big(\frac{k\,
\tilde{u}_{k-1}}{\tilde{u}_k}(r)\Big)\leq \lambda_1(B^w_R)\leq
\sup_{B^w_R}\Big(\frac{k\,
\tilde{u}_{k-1}}{\tilde{u}_k}(r)\Big),\qquad \textrm{for all }
k\geq 1.
\end{equation}

Now, we are going to show inductively that the functions $k\,
\tilde{u}_{k-1}(r)/\tilde{u}_{k}(r)$ are increasing for all $k
\geq 1$ and consequently the value of the infimum and the supremum
are attained in $r=0$ and $r=R$ respectively.

So, we begin studying the quotient $2
\tilde{u}_1(r)/\tilde{u}_2(r)$ (note that we can assume that
$\tilde{u}_0\equiv 1$, and $1/\tilde{u}_1(r)$ would be
increasing). By \eqref{eq_ukint} and \eqref{eq_uk}  the first
derivative of the quotient is given by

\begin{eqnarray*}
&&\big(\frac{2 \tilde{u}_1}{\tilde{u}_2}\big)'(r)=
4\frac{-G(\tilde{u}_1)(r)\,\int_0^r w^{m-1}(s)\,ds
+G(1)(r)\,\int_0^r w^{m-1}(s) \tilde{u}_1(s)\,ds}{w^{m-1}(r)
(\tilde{u}_2(r))^2}\\
&=& \frac{\int_r^R \frac{-4}{w^{m-1}(t)}\Big(\int_0^r
w^{m-1}(s)\,ds \int_0^t w^{m-1}(s) \tilde{u}_1(s)\,ds-\int_0^t
w^{m-1}(s)\,ds \int_0^r w^{m-1}(s) \tilde{u}_1(s)\,ds
\Big)\,dt}{w^{m-1}(r) (\tilde{u}_2(r))^2} \quad .
\end{eqnarray*}

We define $h(r):=\frac{\int_0^r w^{m-1}(s)\,ds}{\int_0^r
w^{m-1}(s) \tilde{u}_1(s)\,ds}$. Since $\tilde{u}_1$ is a
decreasing function,

\begin{eqnarray*}
h'(r)&=&\frac{w^{m-1}(r)\int_0^r w^{m-1} (s) \tilde{u}_1(s)\,ds -
w^{m-1}(r) \tilde{u}_1(r)\int_0^r w^{m-1}(s)\,ds}{(\int_0^r
w^{m-1}(s) \tilde{u}_1(s)\,ds)^2}\geq 0,
\end{eqnarray*}
and $h(r)\leq h(t)$ for all $t\geq r$. Therefore,

\begin{displaymath}
\int_0^r w^{m-1}(s)\,ds \int_0^t w^{m-1}(s)
\tilde{u}_1(s)\,ds-\int_0^t w^{m-1}(s)\,ds \int_0^r w^{m-1}(s)
\tilde{u}_1(s)\,ds \leq 0,
\end{displaymath}

and $\big(\frac{2 \tilde{u}_1}{\tilde{u}_2}\big)'(r)\geq 0$. Now,
if we assume that $k\, \tilde{u}_{k-1}/\tilde{u}_{k}=
\tilde{u}_{k-1}/G(\tilde{u}_{k-1})$ is increasing, applying Lemma
2 of \cite{Sa} we obtain that
$(k+1)\,\tilde{u}_k/\tilde{u}_{k+1}=G(\tilde{u}_{k-1})/G^2(\tilde{u}_{k-1})$
is also increasing and we are done.

By virtue of the monoticity of the quotients
$k\,\tilde{u}_{k-1}(r)/\tilde{u}_k(r)$ equation
\eqref{lambda_1Barta} reads
\begin{equation}\label{Barta2}
\inf_{B^w_R} \Big(\frac{k\,
\tilde{u}_{k-1}}{\tilde{u}_k}(r)\Big)=\frac{k\,
\tilde{u}_{k-1}}{\tilde{u}_k}(0)\leq \lambda_1(B^w_R)\leq
\sup_{B^w_R}\Big(\frac{k\, \tilde{u}_{k-1}}{\tilde{u}_k}(r)\Big)=
\frac{k\, \tilde{u}_{k-1}}{\tilde{u}_k}(R).
\end{equation}

To obtain \eqref{lamba1_emparedado} from inequalities
\eqref{Barta2}, note that applying L'Hopital and taking into
account equation \eqref{momentsW}, for $k \geq 2$,
\begin{equation}
\begin{aligned}
k\,\tilde{u}_{k-1}(R)/\tilde{u}_{k}(R)&=\lim_{r \to R}k\,\tilde{u}_{k-1}(r)/\tilde{u}_{k}(r)\\
&=\lim_{r \to
R}\frac{k\,\tilde{u}_{k-1}'(r)}{\tilde{u}_{k}'(r)}=\frac{(k-1)\,
\mathcal{A}_{k-2}(B^w_R)}{\mathcal{A}_{k-1}(B^w_R)} \quad .
\end{aligned}
\end{equation}
\end{proof}

When $k=1$, the above proposition gives us immediately the
following upper and lower bounds for the first eigenvalue of a
geodesic ball. The lower bound was obtained in \cite{BCG} for
geodesic balls in the $n$-dimensional sphere $\mathbb{S}^n(1)$,
and later it was generalized for an arbitrary $M^m_w$ in
\cite{BaBe}. The upper bound give us a new relation between the
first Dirichlet eigenvalue and the torsional rigidity of a
geodesic ball in a rotationally symmetric space.

\begin{corollary}
The first eigenvalue $\lambda_1(B^w_R)$ of the geodesic balls in the rotationally symmetric spaces $M^m_w$ satisfies
\begin{equation}\label{UpperLower}
\frac{1}{\int_0^R
q_w(t)\,dt}\leq \lambda_1(B^w_R)\leq  \frac{{\rm vol}(B^w_R)}{\mathcal{A}_1(B^w_R)} \quad ,
\end{equation}
where $q_w(t)$ is the isoperimetric quotient defined by
\begin{equation}\label{isoquotient}
q_w(t):=\frac{\int_0^t w^{m-1}(s)\, ds}{w^{m-1}(t)}.
\end{equation}
\end{corollary}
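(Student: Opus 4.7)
The plan is to specialize Proposition \ref{inequalitylamba1} to $k=1$ and then simply identify the two sides of the resulting estimate with the quantities appearing in \eqref{UpperLower}. This is essentially a bookkeeping exercise on top of the proposition just proved, so the corollary should follow without any new analytic input.

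First I would write out the $k=1$ case of \eqref{lamba1_emparedado}, namely
\begin{equation*}
\frac{\tilde{u}_0(0)}{\tilde{u}_1(0)}\;\leq\;\lambda_1(B^w_R)\;\leq\;\frac{\mathcal{A}_0(B^w_R)}{\mathcal{A}_1(B^w_R)}.
\end{equation*}
Since by construction $\tilde{u}_0\equiv 1$ on $B^w_R$, the numerator on the left is $1$ and the numerator on the right is $\mathcal{A}_0(B^w_R)=\int_{B^w_R} 1\,dV=\Vol(B^w_R)$. This immediately yields the right-hand inequality of \eqref{UpperLower}.

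For the left-hand inequality I would evaluate $\tilde{u}_1(0)$ by means of the explicit formula \eqref{eq_ukint} from Proposition \ref{propW1}. With $k=1$ and $\tilde{u}_0\equiv 1$ one obtains
\begin{equation*}
\tilde{u}_1(r)\;=\;\int_r^R\frac{\int_0^t w^{m-1}(s)\,ds}{w^{m-1}(t)}\,dt\;=\;\int_r^R q_w(t)\,dt,
\end{equation*}
so that $\tilde{u}_1(0)=\int_0^R q_w(t)\,dt$. Substituting this into $1/\tilde{u}_1(0)\leq \lambda_1(B^w_R)$ produces exactly the lower bound in \eqref{UpperLower}.

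There is no real obstacle here: the monotonicity of the Barta quotients and the boundary limit computed via L'H\^opital were the actual work, and they are already encoded in Proposition \ref{inequalitylamba1}. The only thing to be mildly careful about is that Proposition \ref{inequalitylamba1} is stated for $k\geq 1$, so the $k=1$ specialization is legal; the identification $\mathcal{A}_0=\Vol$ and the explicit integral formula for $\tilde{u}_1$ are then purely mechanical.
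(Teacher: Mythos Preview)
Your proof is correct and is exactly the approach the paper takes: the paper simply says the corollary follows immediately from Proposition~\ref{inequalitylamba1} with $k=1$, and you have spelled out the identifications $\tilde{u}_0\equiv 1$, $\mathcal{A}_0(B^w_R)=\Vol(B^w_R)$, and $\tilde{u}_1(0)=\int_0^R q_w(t)\,dt$ that make this work.
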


We are going now
to show that Proposition \ref{inequalitylamba1} give us better
estimations of the first Dirichlet eigenvalue of the geodesic balls in rotationally symmetric spaces, because  inequalities \eqref{lamba1_emparedado} improve when
$k$ increases.

\begin{corollary} Let $\tilde{u}_k$ be the functions defined on $B^w_R$ and given by
\eqref{eq_ukint}. Then,

\begin{equation}\label{ineq_limit}
\lim_{k\rightarrow \infty}
\frac{k\,\tilde{u}_{k-1}}{\tilde{u}_k}(0)\leq \lambda_1(B^w_R)\leq
\lim_{k\rightarrow\infty} \frac{k\,
\mathcal{A}_{k-1}(B^w_R)}{\mathcal{A}_{k}(B^w_R)}.
\end{equation}
\end{corollary}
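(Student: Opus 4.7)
The statement is obtained by passing to the limit $k\to\infty$ in the two-sided bound already established in Proposition \ref{inequalitylamba1}, namely
$$
a_k\;:=\;\frac{k\,\tilde u_{k-1}(0)}{\tilde u_k(0)}\;\leq\;\lambda_1(B^w_R)\;\leq\;\frac{k\,\mathcal A_{k-1}(B^w_R)}{\mathcal A_k(B^w_R)}\;=:\;b_k,\qquad k\geq 1.
$$
Since $\{a_k\}$ is bounded above and $\{b_k\}$ is bounded below by $\lambda_1(B^w_R)$, the only real content is to verify that the two limits in \eqref{ineq_limit} actually exist; once that is done, passing to $k\to\infty$ preserves the inequalities termwise.

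For the existence step I would rewrite both quantities through the Green operator identity $\tilde u_k = k!\,G^k(1)$, where $G$ is the operator in \eqref{eqGreen}. This turns the bounds into $a_k = G^{k-1}(1)(0)/G^k(1)(0)$ and $b_k = T_{k-1}/T_k$ with $T_k := \int_{B^w_R} G^k(1)\,dV$. Expanding the constant function $1$ in the $L^2$-orthonormal Dirichlet eigenbasis $\{\phi_i\}$ of $-\Delta$ on $B^w_R$, with eigenvalues $0<\lambda_1<\lambda_2\leq\cdots$, yields the spectral representations
$$
T_k=\sum_i \frac{c_i^2}{\lambda_i^k},\qquad G^k(1)(0)=\sum_i \frac{c_i\,\phi_i(0)}{\lambda_i^k},\qquad c_i:=\int_{B^w_R}\phi_i\,dV.
$$
For $\{b_k\}$, the nonnegativity of the coefficients $c_i^2$ makes Cauchy--Schwarz applicable and yields $T_k^2\leq T_{k-1}T_{k+1}$, hence $b_{k+1}\leq b_k$; so $\{b_k\}$ is monotone decreasing and bounded below, and $\lim b_k$ exists. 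For $\{a_k\}$ the plan is to apply Sato's bootstrap lemma \cite[Lemma~2]{Sa}, already exploited in the proof of Proposition \ref{inequalitylamba1}, iteratively to the positive ratios $G^{j-1}(1)(r)/G^j(1)(r)$ in order to upgrade the known $r$-monotonicity to $k$-monotonicity of the values at the pole $r=0$.

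The main obstacle lies in this last step: because the coefficients $c_i\,\phi_i(0)$ in the spectral expansion of $G^k(1)(0)$ have indefinite sign (radial Dirichlet eigenfunctions beyond the first one change sign), a direct Cauchy--Schwarz is unavailable and one must rely either on Sato's bootstrap or on the asymptotic dominance of the first radial eigenfunction, whose contribution $c_1\phi_1(0)\,\lambda_1^{-k}>0$ controls the sum for large $k$, to conclude that $\lim a_k$ exists. Once both limits are known to exist, taking $k\to\infty$ in Proposition \ref{inequalitylamba1} immediately delivers \eqref{ineq_limit}.
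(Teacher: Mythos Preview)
Your overall plan---pass to the limit in Proposition \ref{inequalitylamba1} after verifying that both limits exist---is exactly what the paper does.

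For $\{b_k\}$, your spectral Cauchy--Schwarz argument ($T_k^2\le T_{k-1}T_{k+1}$, hence $b_{k+1}\le b_k$) is correct and genuinely different from the paper's route. The paper instead invokes Sato's Lemma~1 (not Lemma~2), which for a positive Green operator asserts
\[
\inf_r \frac{u}{G(u)}\;\le\;\frac{G(u)}{G^2(u)}(r)\;\le\;\sup_r \frac{u}{G(u)}
\]
pointwise; evaluating at $r=R$ and using the $r$-monotonicity already proved in Proposition \ref{inequalitylamba1} (together with the L'H\^opital identification $k\tilde u_{k-1}(R)/\tilde u_k(R)=b_{k-1}$) yields $b_{k}\le b_{k-1}$. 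Your approach is equally clean and bypasses the Green-operator machinery, at the mild cost of appealing to the full Dirichlet eigenbasis.

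For $\{a_k\}$, however, your proposal has a gap that you yourself flag. You cite \cite[Lemma~2]{Sa}, but that lemma only propagates the \emph{$r$-monotonicity} of $u/G(u)$ to $G(u)/G^2(u)$; it says nothing about \emph{$k$-monotonicity} at a fixed point. What the paper actually uses here is again Lemma~1: evaluating the sandwich above at $r=0$ with $u=\tilde u_{k-1}$ gives directly
\[
a_k=\frac{k\,\tilde u_{k-1}}{\tilde u_k}(0)=\inf_r\frac{k\,\tilde u_{k-1}}{\tilde u_k}\;\le\;\frac{(k+1)\,\tilde u_{k}}{\tilde u_{k+1}}(0)=a_{k+1},
\]
so $\{a_k\}$ is increasing and bounded above by $\lambda_1(B^w_R)$, hence convergent. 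Your alternative route via asymptotic dominance of the $\lambda_1$-mode would also succeed (and in fact already shows $a_k\to\lambda_1$), but it requires justifying the pointwise expansion $G^k(1)(0)=\sum_i c_i\phi_i(0)\lambda_i^{-k}$ and the positivity $c_1\phi_1(0)>0$; the Sato Lemma~1 argument is the quicker fix.
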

\begin{proof}
Since $G$ is a positive operator and
$(k+1)\,\tilde{u}_k/\tilde{u}_{k+1}=G(\tilde{u}_{k-1})/G^2(\tilde{u}_{k-1})$,
applying Lemma 1 of \cite{Sa}, we obtain that for $k\geq 1$, and
for all $r \in [0,R]$:

\begin{equation}\label{eq1}
\inf_{B^w_R} \Big(\frac{k\,
\tilde{u}_{k-1}}{\tilde{u}_k}(r)\Big)=\frac{k\,
\tilde{u}_{k-1}}{\tilde{u}_k}(0)\leq \frac{(k+1)\,
\tilde{u}_{k}}{\tilde{u}_{k+1}}(r)\leq \sup_{B^w_R}\Big(\frac{k\,
\tilde{u}_{k-1}}{\tilde{u}_k}(r)\Big)= \frac{k\,
\tilde{u}_{k-1}}{\tilde{u}_k}(R).
\end{equation}

In particular, for $r=0$ and for all $k \geq 1$, we have
\begin{equation}\label{eq2}
\frac{k\, \tilde{u}_{k-1}}{\tilde{u}_k}(0)\leq \frac{(k+1)\,
\tilde{u}_{k}}{\tilde{u}_{k+1}}(0) \quad .
\end{equation}

Moreover, by Proposition \ref{inequalitylamba1}, we know that, for
all $k \geq 1$, $\frac{k\, \tilde{u}_{k-1}}{\tilde{u}_k}(0) \leq
\lambda_1(B^w_R)$, so therefore $\{\frac{k\,
\tilde{u}_{k-1}}{\tilde{u}_k}(0)\}_{k=1}^{\infty}$ is a bounded
increasing sequence, so there exists
\[\lambda:= \lim_{k\rightarrow \infty} \frac{k\,\tilde{u}_{k-1}}{\tilde{u}_k}(0) \leq \lambda_1(B^w_R).\]

On the other hand, taking $r=R$ in (\ref{eq1}) we obtain, for all $k \geq 1$

\begin{equation}\label{eq3}
\frac{(k+1)\, \tilde{u}_{k}}{\tilde{u}_{k+1}}(R)\leq\frac{k\,
\tilde{u}_{k-1}}{\tilde{u}_k}(R).
\end{equation}

Moreover, by inequality \eqref{Barta2} in Proposition
\ref{inequalitylamba1}, we know that, for all $k \geq 1$,
$\frac{k\, \tilde{u}_{k-1}}{\tilde{u}_k}(R) \geq
\lambda_1(B^w_R)$, so therefore $\{\frac{k\,
\tilde{u}_{k-1}}{\tilde{u}_k}(R)\}_{k=1}^{\infty}$  is a
decreasing sequence bounded from below with the limit
\[\mu:= \lim_{k\rightarrow \infty} \frac{k\,\tilde{u}_{k-1}}{\tilde{u}_k}(R)=\lim_{k\rightarrow \infty} \frac{k\,
\mathcal{A}_{k-1}(B^w_R)}{\mathcal{A}_{k}(B^w_R)}\geq
\lambda_1(B^w_R),\]

and this concludes the proof.
\end{proof}

We are now able to state and prove the main result of this section, which is also stated as Theorem A  in the introduction:

\begin{theorem} \label{lambda1model}Let $B^w_R$ be the geodesic ball of radius $R$ in
$M^m_w$. Then,

\begin{displaymath}
\lambda_1(B^w_R)=\lim_{k\rightarrow
\infty}\frac{k\,\tilde{u}_{k-1}(0)}{\tilde{u}_k(0)}=\lim_{k\rightarrow
\infty}\frac{k\,\mathcal{A}_{k-1}(B^w_R)}{\mathcal{A}_{k}(B^w_R)},
\end{displaymath}
where $\tilde{u}_k$ are the functions defined by \eqref{eq_ukint}
and $\mathcal{A}_{k}(B^w_R)$ is the $k$-moment of $B^w_R$.
Moreover, the radial $C^2$-function
$g_\infty(r):=\lim_{k\rightarrow \infty}
\frac{\tilde{u}_k(r)}{\tilde{u}_k(0)}$ is an eigenfunction of the
first eigenvalue.
\end{theorem}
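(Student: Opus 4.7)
From the preceding corollary I already know
\[
\lambda := \lim_{k\to\infty}\frac{k\,\tilde{u}_{k-1}(0)}{\tilde{u}_{k}(0)} \;\leq\; \lambda_{1}(B^{w}_{R}) \;\leq\; \lim_{k\to\infty}\frac{k\,\mathcal{A}_{k-1}(B^{w}_{R})}{\mathcal{A}_{k}(B^{w}_{R})} =: \mu,
\]
so the task reduces to closing this squeeze and simultaneously identifying the limit function as a genuine eigenfunction. The natural object is the normalized iterate $v_{k}(r):=\tilde{u}_{k}(r)/\tilde{u}_{k}(0)$. The hierarchy \eqref{eqmoments1} rewrites cleanly as $v_{k}=a_{k}\,G(v_{k-1})$ with $a_{k}=k\,\tilde{u}_{k-1}(0)/\tilde{u}_{k}(0)$ and $G$ the radial Green operator of \eqref{eqGreen}. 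Since Proposition \ref{inequalitylamba1} has established that $r\mapsto(k+1)\tilde{u}_{k}(r)/\tilde{u}_{k+1}(r)$ is increasing, comparing its values at $0$ and at $r$ immediately gives $v_{k+1}(r)\leq v_{k}(r)$, so $\{v_{k}\}_{k}$ is pointwise monotonically decreasing with $v_{k}(0)=1$ and $v_{k}(R)=0$ throughout.

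Monotone convergence now delivers a pointwise limit $g_{\infty}\in[0,1]$ with $g_{\infty}(0)=1$ and $g_{\infty}(R)=0$. The core step is to pass to the limit in the recursion: since $0\leq v_{k-1}\leq 1$ and $v_{k-1}\to g_{\infty}$ pointwise, two applications of dominated convergence inside the nested integrals of \eqref{eqGreen} produce $G(v_{k-1})(r)\to G(g_{\infty})(r)$ for every $r\in[0,R]$. Combined with $a_{k}\to\lambda$ this yields the fixed-point identity
\[
g_{\infty}(r) \;=\; \lambda\,G(g_{\infty})(r).
\]
Because $G$ maps bounded measurable functions into $C^{2}([0,R])$ and returns a genuine radial $C^{2}$ function on $B^{w}_{R}$ (the condition $w(0)=0$ forces $G(f)'(0)=0$), the identity promotes $g_{\infty}$ to a $C^{2}$ radial function satisfying $\Delta g_{\infty}+\lambda g_{\infty}=0$ with Dirichlet boundary condition $g_{\infty}(R)=0$. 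Since $g_{\infty}\geq 0$ is nontrivial (note $g_{\infty}(0)=1$), the classical characterization of the principal Dirichlet eigenfunction as the unique sign-preserving one (up to scaling) forces $\lambda=\lambda_{1}(B^{w}_{R})$ and identifies $g_{\infty}$ as a first eigenfunction.

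For the $\mu$-side, I would write
\[
\frac{k\,\mathcal{A}_{k-1}(B^{w}_{R})}{\mathcal{A}_{k}(B^{w}_{R})} \;=\; a_{k}\cdot\frac{\int_{0}^{R} w^{m-1}(s)\,v_{k-1}(s)\,ds}{\int_{0}^{R} w^{m-1}(s)\,v_{k}(s)\,ds},
\]
and invoke dominated convergence with integrable majorant $w^{m-1}$ to send both integrals to $\int_{0}^{R}w^{m-1}\,g_{\infty}\,ds$, which is strictly positive by continuity of $g_{\infty}$ together with $g_{\infty}(0)=1$. Hence the ratio tends to $1$ and $\mu=\lambda=\lambda_{1}(B^{w}_{R})$.

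The main obstacle I anticipate is the potential collapse of the monotone limit $g_{\infty}$ to the zero function, in which case the identity $g_{\infty}=\lambda\,G(g_{\infty})$ would be useless for pinning down $\lambda$. This is precisely the role of normalizing at the pole rather than at any other interior point: the identity $v_{k}(0)\equiv 1$ is automatically preserved in the monotone limit and guarantees $g_{\infty}(0)=1$, hence a genuine nontrivial principal eigenfunction.
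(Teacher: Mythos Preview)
Your argument is correct and in fact streamlines the paper's proof at two points. Both you and the authors normalize by $\tilde u_k(0)$ and use the monotonicity from Proposition~\ref{inequalitylamba1} to obtain a decreasing sequence $v_k\downarrow g_\infty$. From there the paths diverge. The paper bounds $|g_k'|$ and $|g_k''|$ uniformly, invokes Ascoli--Arzel\`a to upgrade pointwise convergence to uniform $C^2$ convergence, and then reads off the ODE $\Delta g_\infty=-\lambda g_\infty$ directly; it concludes $\lambda=\lambda_1$ via Barta's two-sided inequality (since $-\Delta g_\infty/g_\infty\equiv\lambda$). You instead pass to the limit in the integral identity $v_k=a_k\,G(v_{k-1})$ by dominated convergence, land on the fixed-point equation $g_\infty=\lambda\,G(g_\infty)$, and bootstrap regularity through $G$; you then identify $\lambda$ with $\lambda_1$ by the sign characterization of the principal eigenfunction. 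For the $\mu$-side the contrast is sharper: the paper introduces a \emph{second} normalization $h_k=\tilde u_k/(-\tilde u_k'(R))$, proves a separate convergence lemma for $h_\infty$, and applies Barta once more, whereas your computation $\mu=\lim a_k\cdot\bigl(\int w^{m-1}v_{k-1}\bigr)/\bigl(\int w^{m-1}v_k\bigr)\to\lambda\cdot 1$ dispatches this in one line via dominated convergence. Your route is more economical; the paper's route has the minor advantage of being entirely self-contained (Barta rather than the nodal/Krein--Rutman characterization) and of producing explicit uniform $C^2$ convergence of the $g_k$, which is used again later in the proof of Theorem~\ref{th_const_below}.
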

\begin{proof}

We are going to apply Barta's Lemma to show equality
\begin{displaymath}
\lambda_1(B^w_R)=\lim_{k\rightarrow
\infty}\frac{k\,\tilde{u}_{k-1}(0)}{\tilde{u}_k(0)} \quad .
\end{displaymath}

For that, we shall first define (see Lemma \ref{C2limit} below) a positive radial $C^2$-function $g_\infty(r)$ defined on $B^w_R$ such that
$g_\infty(R)=0$ and such that satisfies

\begin{displaymath}
\Delta
g_\infty(r)=g_\infty''(r)+(m-1)\eta_w(r)g_\infty'(r)=-\lambda\,
g_\infty(r) \quad ,
\end{displaymath}
\noindent where $\lambda= \lim_{k\rightarrow \infty} \frac{k\,\tilde{u}_{k-1}}{\tilde{u}_k}(0)$.

Then, $\lambda$ is a Laplacian eigenvalue on $B^w_R$ with
eigenfunction $g_\infty(r)$. As $g_\infty(R)=0$, Barta's
inequalities imply that

\begin{equation}
\lambda=\inf_{B^w_R} \Big(\frac{-\Delta g_\infty}{g_\infty}\Big) \leq \lambda_1(B^w_R) \leq
\sup_{B^w_R}\Big(\frac{-\Delta g_\infty}{g_\infty}\Big)= \lambda \quad ,
\end{equation}

and hence
\begin{equation}
\lambda=\lim_{k\rightarrow \infty} \frac{k\,\tilde{u}_{k-1}}{\tilde{u}_k}(0)=\lambda_1(B^w_R) \quad .
\end{equation}

We now need the following

\begin{lemma}\label{C2limit}
Let $g_k(r):=\tilde{u}_k(r)/\tilde{u}_k(0)$ be the functions
defined by \eqref{eq_ukint} normalized so that $g_k(0)=1$ for all
$k$. Then $g_\infty(r):=\lim_{k\rightarrow \infty} g_k(r)$ is a
positive radial $C^2$-function defined on $B^w_R$ such that $\Delta g_\infty(r)= -\lambda g_\infty(r)$ and
$g_\infty(R)=0$.
\end{lemma}
\begin{proof}
Since $k \tilde{u}_{k-1}(r)/\tilde{u}_k(r)$ is increasing in $r$ for all $k$,
we have that
\[\frac{k\, \tilde{u}_{k-1}}{\tilde{u}_{k}}(r)\geq \frac{k\, \tilde{u}_{k-1}}{\tilde{u}_k}(0) \quad ,  \]
and then, for a fixed $r$, $\{g_k(r)\}_{k=1}^\infty$ is a decreasing sequence of bounded
functions converging  pointwise to a function $g_\infty(r)$.
Moreover, since $\tilde{u}_{k-1}$ is a decreasing function
\begin{equation}\label{g'kbounded}
|g'_k(r)|=\frac{k\,\int_0^r w^{m-1}(s)\tilde{u}_{k-1}(s)\,ds}{
w^{m-1}(s)\tilde{u}_k(0)}\leq
\frac{k\,\tilde{u}_{k-1}(0)}{\tilde{u}_k(0)} q_w(r)\leq \lambda\,
\max_{[0,R]} \{q_w(r)\},
\end{equation}
where we have used that $\{k\,
\tilde{u}_{k-1}(0)/\tilde{u}_k(0)\}_k$ is an increasing sequence
converging to $\lambda$ and that $q_w(r)$ is the isoperimetric
quotient defined in \eqref{isoquotient} that is a continuous
function on the compact interval $[0,R]$.

Then, the first derivatives of the functions $g_k$ are uniformly
bounded, and hence the sequence of functions is uniformly bounded and equicontinuous. As a
consequence of the Ascoli-Arzela Theorem, $\{g_k\}_{k=1}^\infty$ converges to
$g_\infty$ uniformly and $g_\infty$ is a continuous function. In
addition,
\begin{equation}
\begin{aligned}
\lim_{k\rightarrow \infty} g'_k(r) &= \lim_{k\rightarrow \infty
}\frac{-k\,\int_0^r w^{m-1}(s)\tilde{u}_{k-1}(s)\,ds}{w^{m-1}(r)u_k(0)}\\
&=\lim_{k\rightarrow \infty}
\frac{-k\,\tilde{u}_{k-1}(0)}{\tilde{u}_k(0)}\frac{\int_0^r
w^{m-1}(s)g_{k-1}(s)\,ds}{w^{m-1}(r)} =-\lambda\, \frac{\int_0^r
w^{m-1}(s)g_\infty(s)\,ds}{w^{m-1}(r)}.
\end{aligned}
\end{equation}
Now, taking into account that
\begin{displaymath}
\tilde{u}^{''}_k(r)=-k\,\tilde{u}_{k-1}(r)-(m-1)\frac{w'(r)}{w(r)}\,\tilde{u}^{'}_{k}(r) \quad ,
\end{displaymath}
and using \eqref{g'kbounded}
and that $g_{k-1}$ is bounded above by $1$,
\begin{equation}
\begin{aligned}
|g_k''(r)|&=|-\frac{k\,\tilde{u}_{k-1}(0)}{\tilde{u}_k(0)}\,g_{k-1}(r)-(m-1)\eta_w(r)g'_k(r)|\\
&\leq  \lambda|g_{k-1}(r)| +(m-1)\lambda |\eta_w(r)q_w(r)|\\
&\leq  \lambda +(m-1)\lambda |\eta_w(r)q_w(r)|.
\end{aligned}
\end{equation}

But $w(r)$ satisfies $w(0)=0$ and $w'(0)=1$. Hence,
 $\eta_w(r) q_w(r)=\frac{w'(r) \int_0^r w^{m-1}(t) dt}{w^{m}(r)}$ is a continuous function defined on
$[0,R]$ because by L'Hopital rule
\begin{equation}
\lim_{r \to 0} \frac{\int_0^r w^{m-1}(t) dt}{w^{m}(r)}=\frac{1}{m} \quad .
\end{equation}

 Therefore, the derivatives $g_k''(r)$ are uniformly bounded on
$[0,R]$ and the functions $g_k'$ converge uniformly. As a
consequence of this,
\[g_\infty'(r)=\lim_{k\rightarrow \infty} g_k'(r)=-\lambda\, \frac{\int_0^r
w^{m-1}(s)g_\infty(s)\,ds}{w^{m-1}(r)}.\]

To finish the proof we only need to show that $g''_k(r)$ also converges
uniformly to $g''_\infty(r)$. But it is clear using the definition of uniform convergence, since

\begin{eqnarray*}
|g_k''(r)&+&\lambda g_\infty(r) +(m-1)\eta_w(r)g_\infty'(r)|=\\
 &&
|-\frac{k\,\tilde{u}_{k-1}(0)}{\tilde{u}_k(0)}g_{k-1}(r)-(m-1)\eta_w(r)g_k'(r)+\lambda
g_\infty(r) +(m-1)\eta_w(r)g_\infty'(r)|\\
&\leq & |\lambda
g_\infty(r)-\frac{k\,\tilde{u}_{k-1}(0)}{\tilde{u}_k(0)}g_{k-1}(r)|\\
&&+(m-1)\Big|\eta_w(r)\frac{\int_0^r
w^{m-1}(s)\Big(\frac{k\,\tilde{u}_{k-1}(0)}{\tilde{u}_k(0)}\,g_{k-1}(s)-\lambda\,g_\infty(s)\Big)\,ds}{w^{m-1}(r)}|\\
&\leq &
\sup_{[0,R]}\{|\lambda\,g_\infty(r)-\frac{k\,\tilde{u}_{k-1}(0)}{\tilde{u}_k(0)}\,g_{k-1}(r)|\}(1+(m-1)|\eta_w(r)\,q_w(r)|) \quad .
\end{eqnarray*}
We use the fact that the function $k\,\tilde{u}_{k-1}(0)/\tilde{u}_k(0)\, g_{k-1}(r)$
converges uniformly to $\lambda\,g_\infty(r)$ and that
$\eta_w(r)\,q_w(r)$ is a continuous function defined on $[0,R]$.
Then,
\begin{equation}\label{eq_g''}
g''_\infty(r)=-\lambda g_\infty(r) -(m-1)\eta_w(r)g_\infty'(r).
\end{equation}
The proof of the lemma is then finished by observing that $g_k$ are $C^2$-functions and that
$g_k(R)=0$ for all $R$.
\end{proof}

Returning to the proof of the Theorem, to show equality
\begin{displaymath}
\lambda_1(B^w_R)=\lim_{k\rightarrow
\infty}\frac{k\,\mathcal{A}_{k-1}(B^w_R)}{\mathcal{A}_{k}(B^w_R)},
\end{displaymath}
 we argue in the same way, applying again Barta's Lemma using a positive $C^2$-function satisfying that $ \Delta h_\infty(r)= -\mu h_\infty(r)$ and $h_\infty(R)=0$.




 We need the following

 \begin{lemma}\label{C3limit}
Let us define the functions $h_k(r)=\tilde{u}_k(r)/(-\tilde{u}'_k(R))$ for all $k \geq 0$. Then $h_\infty(r) =\lim_{k\rightarrow \infty} h_k(r)$ is a
positive radial $C^2$-function defined on $B^w_R$ such that $\Delta h_\infty(r)= -\mu h_\infty(r)$ and
$h_\infty(R)=0$.
\end{lemma}
\begin{proof}
 Let us normalize the functions $\tilde{u}_k$ by putting
$h_k(r)=\tilde{u}_k(r)/(-\tilde{u}'_k(R))$. Then, since $k
\tilde{u}_{k-1}/\tilde{u}_k$ is increasing for all $k$, we have
that
\[\frac{k\, \tilde{u}_{k-1}}{\tilde{u}_{k}}(r)\leq \frac{k\, \tilde{u}_{k-1}}{\tilde{u}_k}(R)=\frac{k\,\tilde{u}'_{k-1}(R)}{\tilde{u}'_k(R)} \quad ,  \]

\noindent and $\{h_k(r)\}_{k=1}^\infty$ is an increasing sequence of functions. Moreover,
$h_k(r)\leq h_k(0)=\tilde{u}_k(0)/(-\tilde{u}_k'(R))$ and

\begin{eqnarray}\label{h_k}
\frac{-\tilde{u}_k'(R)}{\tilde{u}_k(0)}&=&\frac{k\,
\tilde{u}_{k-1}(0)}{\tilde{u}_k(0)}\frac{\int_0^R w^{m-1}(s)
\frac{\tilde{u}_{k-1}(s)}{\tilde{u}_{k-1}(0)}\,ds}{w^{m-1}(R)}\\
\nonumber &\geq& \frac{1}{\tilde{u}_1(0)}\frac{\int_0^R w^{m-1}(s)
g_\infty(s)\,ds}{w^{m-1}(R)}=\frac{-g_\infty'(R)}{\lambda_1\,\tilde{u}_1(0)}.
\end{eqnarray}

The functions $h_k$ are therefore bounded from above and thence they
converge pointwise to a function $h_\infty$. Following the lines
of the proof of lemma above and taking into account that
$h_k(r)=\frac{\tilde{u}_k(0)}{-\tilde{u}_k'(R)}\,g_k(r)$ and that
$\frac{-\tilde{u}_k'(R)}{\tilde{u}_k(0)}$ converges to
$g_\infty'(R)$ by \eqref{h_k}, we can conclude that $h_\infty$ is
a $C^2$-function satisfying that $h_\infty(R)=0$ and
\begin{equation}\label{h''k}
h_\infty''(r)=-\mu h_\infty(r)-(m-1)\eta_w(r)h_\infty'(r).
\end{equation}

This concludes the proof of Lemma \ref{C3limit}.
\end{proof}

To finish the proof of Theorem  \ref{lambda1model} we observe that, on the other hand,
$h_\infty(r)=\lim_{k\rightarrow \infty}
\frac{\tilde{u}_k(R)}{-\tilde{u}_k'(R)}$ is a radial
$C^2$-function on $B^w_R$ satisfying that $h_\infty(R)=0$.
Moreover, $\Delta h_\infty=-\mu\, h_\infty$ by (\ref{h''k}), where

\begin{displaymath} \mu:=\lim_{k\,\rightarrow \infty}
\frac{k\mathcal{A}_{k-1}(B^w_R)}{\mathcal{A}_{k}(B^w_R)}.
\end{displaymath}
Again, Barta's inequalities give us the result.
\end{proof}

\section{Extrinsic and intrinsic Eigenvalue comparison theorems}\label{comparisons}

In this section we now state and prove the comparison theorems for the
first eigenvalue of extrinsic balls of submanifolds that we alluded to in the introduction, but here in the more general contexts of the  comparison constellations. We divide this section into three parts.

In the first we use Theorem \ref{lambda1model} to get upper and lower bounds for the first Dirichlet eigenvalues of the extrinsic balls of a submanifold.

In the second subsection we present the comparison between the
first Dirichlet eigenvalues of the extrinsic balls and the
geodesic balls with the same radius in the spaces used as a model
and described in  Subsection \ref{subsec1}, but now the strategy
is based on the description of the first Dirichlet eigenvalue of a
domain in a Riemannian manifold given by McDonald and Myers in
\cite{McM}. With this method it is furthermore possible to
describe the equality case and obtain a rigidity result in one of
the two settings studied.

The third subsection is devoted to an investigation of the
intrinsic case, namely to apply the results obtained in
Subsections \ref{subsec1} and \ref{subsec2} to the intrinsic
geodesic balls of a manifold with sectional curvatures bounded
from above or from below by the corresponding sectional curvatures
of rotationally symmetric model spaces. In both of these
(intrinsic) cases we also obtain the corresponding rigidity
results.

\subsection{Extrinsic comparison. First strategy}\label{subsec1}

\begin{theorem}\label{th_const_below}
Let $\{ N^{n}, P^{m}, C_{w,g,h}^{m} \}$ denote a comparison
constellation boun\-ded from below in the sense of Definition
\ref{defConstellatNew1}. Assume that $M^m_W=C_{w,g,h}^{m}$ is
$w$-balanced from below. Let $D_R$ be a smooth precompact
extrinsic $R$-ball in $P^m$
 with center at a point $p \in P \subset N$ which also serves as a pole in
 $N$.

 Then we have the following inequalities:
 \begin{equation}\label{ineqleq_submanifold}
 \lambda_1(D_R) \leq \lambda_1 (B^W_{s(R)})=\lim_{k\,\rightarrow \infty}
\frac{k\mathcal{A}_{k-1}(B^W_{s(R)})}{\mathcal{A}_{k}(B^W_{S(R)})} \leq \frac{\Vol(B^W_{S(R)})}{\mathcal{A}_{1}(B^W_{S(R)})} \quad ,
\end{equation}
 where $B^W_{s(R)}$ is the geodesic ball in $M^m_W$.
\end{theorem}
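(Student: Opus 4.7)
The equality $\lambda_{1}(B^{W}_{s(R)}) = \lim_{k\to\infty} k\mathcal{A}_{k-1}(B^{W}_{s(R)})/\mathcal{A}_{k}(B^{W}_{s(R)})$ is just Theorem~\ref{lambda1model} applied to the $W$-model space $M^m_W = C^m_{w,g,h}$, and the rightmost inequality is the upper bound in \eqref{UpperLower} for the geodesic ball $B^W_{s(R)}$. The real content is therefore the first inequality $\lambda_1(D_R) \leq \lambda_1(B^W_{s(R)})$, and my plan is to establish it via Barta's inequality with a family of transplanted moment functions and a limit argument based on Theorem~\ref{lambda1model}.

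For each $k\geq 1$ let $\tilde{u}_k^W$ denote the radial moment function on the model ball $B^W_{s(R)}$ provided by Proposition~\ref{propW1}, and set $f_k := \tilde{u}_k^W\circ s$, where $s$ is the stretching function of Definition~\ref{stretchingfunct}. Define the test function on the extrinsic ball by
\begin{equation*}
F_k(x) := f_k(r(x)) = \tilde{u}_k^W(s(r(x))), \qquad x \in D_R.
\end{equation*}
Since $\tilde{u}_k^W>0$ on $[0,s(R))$ and vanishes at $s(R)$, while $r(\partial D_R) = R$, the function $F_k$ is positive on the interior of $D_R$, smooth, and vanishes on $\partial D_R$; moreover $f_k'\leq 0$. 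These are precisely the ingredients needed for Barta's inequality.

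The central estimate is the pointwise inequality $\Delta^P F_k \geq -k\, F_{k-1}$ on $D_R$. I would derive it as follows. The $p$-radial curvature bound from below, together with $f_k'\leq 0$, places us in the setting of Theorem~\ref{corLapComp}(i), yielding
\begin{equation*}
\Delta^P F_k \geq \bigl(f_k''(r) - f_k'(r)\eta_w(r)\bigr)\|\nabla^P r\|^2 + m f_k'(r)\bigl(\eta_w(r) + \langle\nabla^N r, H_P\rangle\bigr).
\end{equation*}
Lemma~\ref{paren} (which uses the $w$-balance of $C^m_{w,g,h}$ from below) gives $f_k'' - f_k'\eta_w \geq 0$, so the radial tangency hypothesis $\|\nabla^P r\|^2 \geq g^2(r)$ and the mean-curvature bound $\langle \nabla^N r, H_P\rangle = -\mathcal{C}(x) \leq -h(r(x))$---the latter combined with $f_k'\leq 0$---reduce the right-hand side to
\begin{equation*}
g^2(r)\bigl(f_k''(r) - f_k'(r)\eta_w(r)\bigr) + m f_k'(r)\bigl(\eta_w(r) - h(r)\bigr).
\end{equation*}
A direct computation shows this quantity equals $-k f_{k-1}(r) = -k F_{k-1}(x)$: one differentiates $f_k = \tilde{u}_k^W \circ s$ using $s'(r) = 1/g(r)$, extracts the identity
\begin{equation*}
(m-1)\,\eta_W(s(r)) = \frac{m(\eta_w(r) - h(r))}{g(r)} - g(r)\eta_w(r) - g'(r)
\end{equation*}
from the defining ODE \eqref{eqLambdaDiffeq} of $\Lambda$, and substitutes the moment ODE $(\tilde{u}_k^W)''(s) + (m-1)\eta_W(s)(\tilde{u}_k^W)'(s) = -k\tilde{u}_{k-1}^W(s)$.

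Granted $\Delta^P F_k \geq -k F_{k-1}$, Barta's inequality yields $\lambda_1(D_R) \leq \sup_{D_R}(k F_{k-1}/F_k)$. By the monotonicity of $\sigma \mapsto k \tilde{u}_{k-1}^W(\sigma)/\tilde{u}_k^W(\sigma)$ established in the proof of Proposition~\ref{inequalitylamba1}, and since $s\circ r$ sends the interior of $D_R$ into $[0,s(R))$,
\begin{equation*}
\sup_{D_R}\frac{k F_{k-1}}{F_k} \leq \lim_{\sigma \to s(R)^{-}}\frac{k\tilde{u}_{k-1}^W(\sigma)}{\tilde{u}_k^W(\sigma)} = \frac{(k-1)\,\mathcal{A}_{k-2}(B^W_{s(R)})}{\mathcal{A}_{k-1}(B^W_{s(R)})}
\end{equation*}
for every $k \geq 2$, where the last equality is L'Hopital's rule together with the moment formula \eqref{momentsW}. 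Letting $k\to\infty$ and invoking Theorem~\ref{lambda1model} gives $\lambda_1(D_R) \leq \lambda_1(B^W_{s(R)})$. The main obstacle is the algebraic identity collapsing the Laplacian-comparison expression to $-k F_{k-1}$: the defining ODE \eqref{eqLambdaDiffeq} of $\Lambda$ in the construction of $C^m_{w,g,h}$ is precisely rigged so that the tangency function $g$, the mean-curvature bound $h$, and the ambient warping $w$ conspire to preserve the moment recursion under the pullback $x \mapsto s(r(x))$.
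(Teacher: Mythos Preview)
Your argument is correct and uses the same core ingredients as the paper (the Laplacian comparison Theorem~\ref{corLapComp}, Lemma~\ref{paren}, and Barta's inequality), but the order of operations differs. The paper first passes to the limit in the model space, takes the actual eigenfunction $g_\infty(s)=\lim_k \tilde u^W_k(s)/\tilde u^W_k(0)$ from Theorem~\ref{lambda1model}, transplants $v=g_\infty\circ s\circ r$ to $D_R$, and then uses the uniform convergence established in Lemma~\ref{C2limit} to justify interchanging the limit with the derivatives in order to conclude $\Delta^P v\ge -\lambda_1(B^W_{s(R)})\,v$ directly. You instead transplant each $F_k=f_k\circ r$, derive the finite-$k$ inequality $\Delta^P F_k\ge -kF_{k-1}$ (this is exactly the identity the paper uses implicitly when it writes its limit), apply Barta at level $k$, bound $\sup kF_{k-1}/F_k$ via the monotonicity of $k\tilde u^W_{k-1}/\tilde u^W_k$ from Proposition~\ref{inequalitylamba1}, and only then let $k\to\infty$. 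Your route bypasses the $C^2$-convergence machinery of Lemma~\ref{C2limit} at the price of invoking the radial monotonicity of the quotients; the paper's route gets a single clean Barta step but must justify the limit--derivative interchange. Both are short and rest on the same algebraic fact that the ODE \eqref{eqLambdaDiffeq} defining $\Lambda$ is tailored so that $g^2(f_k''-f_k'\eta_w)+m f_k'(\eta_w-h)=-k f_{k-1}$.
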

 \begin{proof}
 Let $g_\infty(s)$ be the eigenfunction of $\lambda_1(B^W_{s(R)})$
 obtained in Theorem \ref{lambda1model} and given by

 \[g_\infty(s)=\lim_{k\rightarrow \infty} \frac{\tilde{u}^W_k(s)}{\tilde{u}^W_k(0)}.\]

 If we compose $g_\infty$ with the stretching function defined
 by \eqref{stretchingfunct}, we obtain a function $\tilde{g}:[0,R]\rightarrow
 \mathbb{R}$ as
\[\tilde{g}(r)=\lim_{k\rightarrow \infty} \frac{\tilde{u}^W_k(s(r))}{\tilde{u}^W_k(0)}=
\lim_{k\rightarrow \infty} \frac{f_k(r)}{f_k(0)},\] where $f_k$
are the functions introduced in Lemma \ref{paren}.

Now let $r$ denote the smooth distance to the pole $p$ on $N$. We
define the radial function $v: D_R\rightarrow \mathbb{R}$ by
$v(q)=\tilde{g}(r(q))$. Using Theorem \ref{corLapComp} and the
fact that
\begin{displaymath}
\tilde{g}'(r)=\lim_{k \rightarrow \infty}
\frac{f_k'(r)}{f_k(0)}\leq 0,
\end{displaymath}
since $f_k'\leq 0$ for all $k$ and the convergence is uniform, we
have that
\begin{eqnarray*}
\Delta^P v=\Delta^P (\tilde{g}\circ r)\geq (\tilde{g}''(r)-
\tilde{g}'(r)\eta_w(r))\|\nabla^P r\|^2+ m
\tilde{g}'(r)(\eta_w(r)+\langle \nabla^N r,H_P\rangle)
\end{eqnarray*}
Again by the uniform convergence proved in Lemma \ref{C2limit},
\begin{displaymath}
\tilde{g}''(r)- \tilde{g}'(r)\eta_w(r)=\lim_{k\rightarrow \infty}
\frac{f_k''(r)-\eta_w(r)f_k'(r)}{f_k(0)}\geq 0,
\end{displaymath}
by virtue of Lemma \ref{paren}.

Then, if we interchange the limits and the derivatives and use
Lemma \ref{paren},
\begin{eqnarray*}
\Delta^P v=\Delta^P (\tilde{g}\circ r)&\geq& (\tilde{g}''(r)-
\tilde{g}'(r)\eta_w(r))g(r)^2+ m \tilde{g}'(r)(\eta_w(r)-h(r))\\
&=&\lim_{k\rightarrow \infty} \frac{-k\, f_{k-1}(r)}{f_k(0)}=
\lim_{k\rightarrow \infty} \frac{-k\,
f_{k-1}(0)}{f_k(0)}\frac{f_{k-1}(r)}{f_{k-1}(0)}\\
&=&-\lambda_1(B^W_{s(R)})\,\tilde{g}\circ
r=-\lambda_1(B^W_{s(R)})\,v.
\end{eqnarray*}

Therefore,
\begin{displaymath}
\sup_{D_R}\{-\Delta^P v/v\}\leq \lambda_1(B^W_{s(R)}).
\end{displaymath}
Since $v_{\vert \partial D_r}=0$, Barta's inequalities give us the
result.
\end{proof}



\begin{theorem} \label{th_const_above}
Let $\{ N^{n}, P^{m}, C_{w,1,h}^{m} \}$  denote a comparison
constellation bounded from above. Assume that
$M^m_W=C_{w,1,h}^{m}$ is $w$-balanced from below. Let $D_R$ be a
smooth precompact extrinsic $R$-ball in $P^m$,
 with center at a point $p \in P$ which also serves as a pole in $N$.
 Then we have
 \begin{equation}\label{ineqgeq_submanifold1}
 \lambda_1(D_R) \geq \lambda_1 (B^W_{R}),
 \end{equation}
 Suppose moreover that
 \begin{equation}
 (m-1)\cdot\inf_{r \in [0,R]}\eta_w(r) \geq m\cdot \sup_{r \in [0,R]}h(r)\quad .
 \end{equation}
 Then 
 \begin{equation}\label{ineqgeq_submanifold2}
 \lambda_1 (B^W_{R})\geq \frac{1}{4}\left( (m-1)\cdot \left(\inf_{r \in [0,R]}\eta_w(r)\right) -m\cdot \left(\sup_{r \in [0,R]}h(r)\right) \right)^2,
 \end{equation}
 where $B^W_{R}(\widetilde{p})$ is the pole centered geodesic ball in the model space $M^m_W$.

\end{theorem}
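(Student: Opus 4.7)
The plan is to address the two claims separately, the first by dualizing the proof of Theorem \ref{th_const_below} and the second by a classical McKean--Cheeger argument applied intrinsically inside $B^W_R$. For \eqref{ineqgeq_submanifold1}, since $g\equiv 1$ the stretching function is the identity $s(r)=r$, and a direct integration of \eqref{eqLambdaDiffeq} yields the compact identity $(m-1)\eta_W(r)=(m-1)\eta_w(r)-m\,h(r)$. I would transplant the radial first eigenfunction $g_\infty(r)$ of $\lambda_1(B^W_R)$ produced in Theorem \ref{lambda1model} to the extrinsic ball by setting $v(q):=g_\infty(r(q))$. By Lemma \ref{C2limit} this $v$ is $C^2$, positive on the interior of $D_R$, and vanishes on $\partial D_R$; since $g_\infty$ is the limit of the decreasing functions $g_k=\tilde{u}_k^W/\tilde{u}_k^W(0)$, one has $g_\infty'\leq 0$, and Lemma \ref{paren} passes to the limit (uniformly, by Lemma \ref{C2limit}) to yield $g_\infty''(r)-g_\infty'(r)\eta_w(r)\geq 0$.

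With these in hand I would apply Theorem \ref{corLapComp}(ii), whose upper-bound curvature hypothesis matches the constellation from above: the inequality $\|\nabla^P r\|^2\leq 1$ absorbs the first term, while the radial mean-curvature upper bound $\mathcal{C}(x)\leq h(r(x))$, equivalent to $\langle\nabla^N r,H_P\rangle \geq -h(r)$, combined with $g_\infty'\leq 0$ controls the second. Adding the two estimates and collapsing to the radial eigenvalue ODE of $g_\infty$ in $M^m_W$ gives
\[
\Delta^P v \,\leq\, \bigl(g_\infty''-g_\infty'\eta_w\bigr)(r) + m\,g_\infty'(r)\bigl(\eta_w(r)-h(r)\bigr) \,=\, g_\infty''(r)+(m-1)\eta_W(r)\,g_\infty'(r) \,=\, -\lambda_1(B^W_R)\,v,
\]
so Barta's inequality (see \cite{Bar,Cha1}) yields $\lambda_1(D_R)\geq \lambda_1(B^W_R)$.

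For the lower bound \eqref{ineqgeq_submanifold2}, I would apply the classical McKean--Cheeger integration-by-parts argument intrinsically inside $B^W_R$. The radial distance $r$ on $M^m_W$ satisfies $|\nabla r|=1$ and
\[
\Delta_W r\,=\,(m-1)\eta_W(r)\,=\,(m-1)\eta_w(r)-m\,h(r)\,\geq\, c \,:=\, (m-1)\inf_{[0,R]}\eta_w - m\sup_{[0,R]}h \,\geq\, 0
\]
by hypothesis. Then for any $\phi\in C_c^\infty(B^W_R)$, integration by parts and Cauchy--Schwarz give
\[
c\int \phi^2 \,\leq\, \int \phi^2\,\Delta_W r \,=\, -2\int \phi\,\langle\nabla \phi,\nabla r\rangle \,\leq\, 2\Bigl(\int \phi^2\Bigr)^{1/2}\Bigl(\int |\nabla \phi|^2\Bigr)^{1/2},
\]
whence $\int|\nabla\phi|^2\geq (c^2/4)\int\phi^2$, and the Rayleigh-quotient characterization of $\lambda_1$ yields $\lambda_1(B^W_R)\geq c^2/4$. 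The only delicate point in the whole argument is the transfer of the convexity-type inequality of Lemma \ref{paren} to the limit function $g_\infty$, and this is already delivered by the uniform $C^2$-convergence of the sequence $g_k$ established in the proof of Lemma \ref{C2limit}, so no new compactness work is needed.
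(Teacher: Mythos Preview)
Your argument is correct and matches the paper's proof almost verbatim: for \eqref{ineqgeq_submanifold1} the paper transplants the same eigenfunction $g_\infty$, invokes Theorem~\ref{corLapComp}(ii) with $g_\infty'\le 0$ and the limit of Lemma~\ref{paren}, and finishes with Barta exactly as you do; for \eqref{ineqgeq_submanifold2} the paper runs the same divergence computation on $B^W_R$ using $\Delta^{M^m_W} r=(m-1)\eta_W(r)=(m-1)\eta_w(r)-mh(r)\ge L_R$, only it phrases the final estimate via Young's inequality with a parameter $\varepsilon$ and optimizes (as in \cite{CheLe,BM}) rather than applying Cauchy--Schwarz directly---the two are equivalent, the optimal $\varepsilon=L_R/2$ reproducing your bound. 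The one point the paper treats that you pass over in silence is the singularity of $\nabla r$ at the pole $p_W$: the paper justifies the integration by parts by approximating $r$ with $r^\alpha$, $1<\alpha<2$, on $B^W_R\setminus B^W_\delta$ and letting $\alpha\to 1$, $\delta\to 0$; you should mention this (or equivalently excise a small ball and control the boundary term), since $\Delta_W r\sim (m-1)/r$ near the center.
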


 \begin{proof}
 The proof follows the line of the proof of Theorem \ref{th_const_below}. Let $g_\infty(r)$ be the eigenfunction of $\lambda_1(B^W_{R})$
 obtained in Theorem \ref{lambda1model} and given by

 \[g_\infty(r)=\lim_{k\rightarrow \infty} \frac{\tilde{u}^W_k(r)}{\tilde{u}^W_k(0)}.\]

 In this case the stretching function is the identity, so we transplant $g_\infty$ to $D_R$ composing it with the smooth distance to the pole
$p$ on $N$. We obtain the radial function $v: D_R\rightarrow
\mathbb{R}$ given by $v(q)=g_\infty(r(q))$. As before, it is easy
to check that

\begin{eqnarray*}
\Delta^P v=\Delta^P (g_\infty\circ r)&\leq& g_\infty''(r)-
g_\infty'(r)\eta_w(r)+ m g_\infty'(r)(\eta_w(r)-h(r))\\
&=&-\lambda_1(B^W_{R})\,g_\infty\circ r=-\lambda_1(B^W_{R})\,v.
\end{eqnarray*}

Therefore,
\begin{displaymath}
\inf_{D_R}\{-\Delta^P v/v\}\geq \lambda_1(B^W_{R}),
\end{displaymath}
and since $v_{\vert \partial D_r}=0$, Barta's inequalities again give us
the first inequality.

To prove inequality (\ref{ineqgeq_submanifold2}), we
follow the argument in Theorem 2 in \cite{CheLe} -- see also the
proof of Lemma 2.3 in \cite{BM} -- and use the variational
Rayleigh formulation of the first Dirichlet eigenvalue of the
pole-centered geodesic ball $B^W_{R}(\widetilde{p})$. Given a
smooth function $f \in C^{\infty}_0(B^W_{R}(\widetilde{p}))$, let
us consider the vector field $X =\nabla^{M^m_W} \mu(r)$, where
$\mu(r)$ is any smooth radial function. Following the lines of
\cite{CheLe} and computing $\Div (f^2\,X)$, we then have for any
$\varepsilon
>0$ (a parameter that we shall fix later):
\begin{equation}\label{eqmu}
\int_{B^W_R} |\mu'(r)| \Vert \nabla^{M^m_W}  f\,\Vert^2 d\sigma
\geq \int_{B^W_R} f^2\, \left(\varepsilon \Delta^{M^m_W}\mu(r) -
\varepsilon^2 |\mu'(r)|\right) d\sigma \quad .
\end{equation}

We consider $\mu(r)=r$.  The vector field $X =\nabla^{M^m_W}
\mu(r)$ is not necessarily smooth, but we may then approximate it
by the smooth field $X = \nabla^{M^m_W} r^{\alpha}$, $1 < \alpha <
2 $, in the punctured ball $B^W_{R}(\widetilde{p}) -
B^W_{\delta}(\widetilde{p})$ and recover the same conclusion as
below following the argument in  \cite[p. 286--288]{BM} by letting
$\alpha \to 1$ and $\delta \to 0$. It is straightforward to see,
using the definition of isoperimetric comparison space, Definition
\ref{defCspace}, that
\begin{equation}\label{eqlap}
\begin{aligned}
\Delta^{M^m_W} r&=(m-1)\eta_W'(r)=(m-1)\eta_w(r)-mh(r) \\ &\geq (m-1) \inf_{r \in [0,R]}\eta_w(r)-m\,\sup_{r \in [0,R]} h(r) \quad .
\end{aligned}
\end{equation}
Using this inequality  in equation (\ref{eqmu}) with $\mu(r)=r$, we obtain the following inequality, where $L_R:= (m-1) \inf_{r \in [0,R]}\eta_w(r)-m\,\sup_{r \in [0,R]} h(r)$ and $Q(\varepsilon):=\varepsilon L_R-\varepsilon^2$:

\begin{equation}\label{eqL}
\int_{B^W_R}  \Vert \nabla^{M^m_W}  f \, \Vert^2 d\sigma \geq \int_{B^W_R} f^2 \, \left(\varepsilon L_R -\varepsilon^2 \right) d\sigma \quad .
\end{equation}

Since $Q(\varepsilon)$ attains its maximum at $\varepsilon_0=\frac{L_R}{2}$, with $Q(\varepsilon_0)=\frac{L^2_R}{4}$,
we finally obtain the desired Poincar\'{e} inequality for all $f \in C^{\infty}_0(B^W_{R}(\widetilde{p}))$:
\begin{equation}
\int_{B^W_R}  \Vert \nabla^{M^m_W}  f \, \Vert^2 d\sigma \geq \frac{L^2_R}{4}\int_{B^W_R} f^2 \, d\sigma \quad,
\end{equation}
which gives the Rayleigh quotient inequality for the first eigenvalue of $B^W_{R}(\widetilde{p})$:
\begin{equation}
\lambda_1 (B^W_{R})\geq \frac{L^2_R}{4} \quad .
\end{equation}
\end{proof}

\subsection{Extrinsic comparison. Second strategy}\label{subsec2}

We present an alternative proof of the first inequality in (\ref{ineqleq_submanifold}) from Theorem \ref{th_const_below} using Theorem \ref{thmMcDonaldExpress} and previous results from \cite{HMP2}:

\begin{theorem}[Second proof of the first inequality of (\ref{ineqleq_submanifold})]\label{th_const_below2}
Let $\{ N^{n}, P^{m}, C_{w,g,h}^{m} \}$ denote a comparison
constellation boun\-ded from below in the sense of Definition
\ref{defConstellatNew1}. Assume that $M^m_W=C_{w,g,h}^{m}$ is
$w$-balanced from below. Let $D_R$ be a smoothly precompact
extrinsic $R$-ball in $P^m$,
 with center at a point $p \in P$ which also serves as a pole in
 $N$.

 Then,
 \begin{equation}\label{ineqleq_submanifold2}
 \lambda_1(D_R) \leq \lambda_1 (B^W_{s(R)})\end{equation}
 where $B^W_{s(R)}$ is the geodesic ball in $M^m_W$.
\end{theorem}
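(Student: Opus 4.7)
The plan is to bypass the pointwise transplantation argument used in the first proof of Theorem \ref{th_const_below} and instead deduce the eigenvalue inequality directly from an integrated moment spectrum comparison combined with the McDonald--Meyers formula (Theorem \ref{thmMcDonaldExpress}).

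The first step is to invoke the isoperimetric-type comparison for the $L^1$-moment spectrum established in \cite{HMP2}. Under the very hypotheses of the present theorem, namely that $\{N^n, P^m, C^m_{w,g,h}\}$ is an isoperimetric comparison constellation bounded from below in the sense of Definition \ref{defConstellatNew1} and that $M^m_W = C^m_{w,g,h}$ is $w$-balanced from below, the machinery of \cite{HMP2} yields the moment comparison
\begin{equation*}
\mathcal{A}_k(D_R) \, \geq \, \mathcal{A}_k(B^W_{s(R)}) \qquad \textrm{for every } k \geq 0 \, .
\end{equation*}
The proof of this inequality in \cite{HMP2} proceeds by induction on $k$: one transplants the radial model functions $f_k = \tilde{u}_k^W \circ s$ introduced in Subsection \ref{SpectrumBall} to the extrinsic ball, applies the Laplacian comparison Theorem \ref{corLapComp} together with Lemma \ref{paren} to control $\Delta^P(f_k \circ r)$ from the correct side, and then invokes the co-area formula to integrate the resulting inequality. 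The $w$-balance assumption on $M^m_W$ is exactly what makes Lemma \ref{paren} applicable and thereby closes the induction.

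The second step is to feed this moment comparison into Theorem \ref{thmMcDonaldExpress}. For any smooth domain $D$, write
\begin{equation*}
S(D) \, = \, \Bigl\{\, \eta \geq 0 \, : \, \limsup_{n \to \infty} \Bigl(\frac{\eta}{2}\Bigr)^{\! n} \frac{\mathcal{A}_n(D)}{\Gamma(n+1)} < \infty \, \Bigr\} \, ,
\end{equation*}
so that $\lambda_1(D) = \sup S(D)$ by Theorem \ref{thmMcDonaldExpress}. Since $\mathcal{A}_n(D_R) \geq \mathcal{A}_n(B^W_{s(R)})$ for every $n$, the pointwise inequality of the sequences
\begin{equation*}
\Bigl(\frac{\eta}{2}\Bigr)^{\! n} \frac{\mathcal{A}_n(D_R)}{\Gamma(n+1)} \, \geq \, \Bigl(\frac{\eta}{2}\Bigr)^{\! n} \frac{\mathcal{A}_n(B^W_{s(R)})}{\Gamma(n+1)}
\end{equation*}
immediately gives the inclusion $S(D_R) \subseteq S(B^W_{s(R)})$, whence
\begin{equation*}
\lambda_1(D_R) \, = \, \sup S(D_R) \, \leq \, \sup S(B^W_{s(R)}) \, = \, \lambda_1(B^W_{s(R)}) \, ,
\end{equation*}
which is exactly \eqref{ineqleq_submanifold2}.

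The main obstacle in this approach is the first step: extracting from \cite{HMP2} the moment comparison in precisely the form $\mathcal{A}_k(D_R) \geq \mathcal{A}_k(B^W_{s(R)})$ valid for \emph{all} $k$, rather than only for $k = 1$ (the torsional rigidity case). The induction must be propagated through the hierarchy \eqref{eqmoments1}, and it is at this point that the $w$-balance condition on $M^m_W$ is indispensable; without it, the sign of $f_k'' - \eta_w f_k'$ is not controlled, the Laplacian comparison argument collapses, and the induction breaks. Once the all-$k$ moment comparison is in hand, the passage to the eigenvalue inequality is a soft consequence of the McDonald--Meyers formula as indicated above.
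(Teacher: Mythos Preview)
Your approach matches the paper's: both feed the moment-spectrum comparison from \cite{HMP2} into the McDonald--Meyers formula (Theorem \ref{thmMcDonaldExpress}) to obtain the inclusion $S(D_R) \subseteq S(B^W_{s(R)})$ and hence the eigenvalue inequality. The only cosmetic difference is that the paper quotes \cite{HMP2} in the boundary-normalized form $\mathcal{A}_n(D_R)/\Vol(\partial D_R) \geq \mathcal{A}_n(B^W_{s(R)})/\Vol(\partial B^W_{s(R)})$ and then invokes the volume inequalities from \cite{HMP}, whereas you assert the unnormalized inequality $\mathcal{A}_k(D_R) \geq \mathcal{A}_k(B^W_{s(R)})$; since the normalizing factor is a fixed positive constant independent of $n$, it does not affect the finiteness of the $\limsup$ defining $S(\cdot)$, so both formulations lead to the same conclusion.
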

\begin{proof}
We here show how to obtain inequality $\lambda_1(D_R) \leq \lambda_1 (B^W_{s(R)})$ using the  description of the first Dirichlet eigenvalue of a smooth precompact domain $D$ in a Riemannian manifold
given by P. McDonald and R. Meyers in \cite{McM}. When $D=D_R$, we have
\begin{equation}\label{desc}
\lambda_1(D_R)= \sup \left\{\eta \geq 0 \,:\, \lim_{n \to \infty}
\sup
\left(\frac{\eta}{2}\right)^n\frac{\mathcal{A}_{n}(D_R)}{\Gamma(n+1)}
<\infty\right\} \quad .
\end{equation}

On the other hand, we have, under the assumptions of the theorem, the following inequalities concerning  the mean exit time moment spectrum of the extrinsic balls, see \cite{HMP2}:

\begin{equation}\label{ineqspec}
\frac{\mathcal{A}_{n}(D_R)}{\Vol(\partial D_R)} \geq \frac{\mathcal{A}_{n}(B^W_{s(R)})}{\Vol(\partial B^W_{s(R)})}\,\,\,\,\,\,\textrm{for all}\,\, n \in \ene \quad .
\end{equation}

And moreover, the following inequalities concerning its volume, see \cite{HMP}:

\begin{equation}\label{ineqvol}
\begin{aligned}
\frac{\Vol(\partial D_R)}{\Vol(D_R)} &\leq \frac{\Vol(\partial B^W_{s(R)})}{\Vol( B^W_{s(R)})} \quad ,\\
\Vol(D_R) &\leq \Vol( B^W_{s(R)}) \quad .
\end{aligned}
\end{equation}

Then, using inequality (\ref{ineqspec}) the set
$$H_1:= \{\eta \geq 0 \,:\, \lim_{n \to \infty} \sup \left(\frac{\eta}{2}\right)^n\frac{\mathcal{A}_{n}(D_R)}{\Gamma(n+1)} <\infty\}$$
 is included in the set
 $$H_2:=\{\eta \geq 0 \,:\, \lim_{n \to \infty}  \sup \left(\frac{\eta}{2}\right)^n\frac{\mathcal{A}_{n}(B^W_{s(R)})}{\Gamma(n+1)} \frac{\Vol(\partial D_R)}{\Vol(\partial B^W_{s(R)})}<\infty\} \quad ,$$
 so we have
\begin{equation}
\begin{aligned}
\lambda_1(D_R)= \sup \{\eta \geq 0 \,:\, \lim_{n \to \infty} & \sup \left(\frac{\eta}{2}\right)^n\frac{\mathcal{A}_{n}(D_R)}{\Gamma(n+1)} <\infty\} \\ \leq
\sup \{\eta \geq 0 \,:\, \lim_{n \to \infty} & \sup \left(\frac{\eta}{2}\right)^n\frac{\mathcal{A}_{n}(B^W_{s(R)})}{\Gamma(n+1)} \frac{\Vol(\partial D_R)}{\Vol(\partial B^W_{s(R)})}<\infty\} \\= \frac{\Vol(\partial D_R)}{\Vol(\partial B^W_{s(R)})} \lambda_1(B^W_{s(R)}) \quad .
\end{aligned}
\end{equation}
Inequalities \eqref{ineqvol} give the desired inequality.
\end{proof}

In the same vein we also present the corresponding alternative proof of the eigenvalue comparison inequality  (\ref{ineqgeq_submanifold1}) from Theorem \ref{th_const_above} again using Theorem \ref{thmMcDonaldExpress} and \cite{HMP2}:

\begin{theorem}[Second proof of inequality (\ref{ineqgeq_submanifold1})] \label{th_const_above2}
Let $\{ N^{n}, P^{m}, C_{w,1,h}^{m} \}$  denote a comparison
constellation bounded from above. Assume that
$M^m_W=C_{w,1,h}^{m}$ is $w$-balanced from below. Let $D_R$ be a
smooth precompact extrinsic $R$-ball in $P^m$,
 with center at a point $p \in P$ which also serves as a pole in $N$.
 Then we have
 \begin{equation}\label{ineqgeq_submanifold3}
 \lambda_1(D_R) \geq \lambda_1 (B^W_{R}),
 \end{equation}
 where $B^W_{R}(\widetilde{p})$ is the pole centered geodesic ball in the model space $M^m_W$.

 If $M^m_W$ is strictly balanced (in the sense that inequality (\ref{eqBalA}) is an strict inequality for all radius $r$) equality in
 \eqref{ineqgeq_submanifold3}  for some fixed radius $R_0$ implies that $D_{R_0}$ is a geodesic cone in $N$. If $P$ is minimal and $N=\hh^n(b)$ is the hyperbolic space, we then have that $P$ is a totally
 geodesic submanifold in $\hh^n(b)$. 

\end{theorem}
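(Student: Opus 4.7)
The plan is to reprise the strategy of Theorem \ref{th_const_below2}, but with the direction of the moment and volume comparisons reversed, and then to address the rigidity assertion separately by tracking the equality case in those comparisons. The eigenvalue inequality \eqref{ineqgeq_submanifold3} itself is already known from the Barta-based proof of Theorem \ref{th_const_above}; rederiving it through Theorem \ref{thmMcDonaldExpress} is what gives us access to the rigidity. I would first apply the McDonald--Myers formula to both $D_R$ and $B^W_R$, writing $\lambda_1(D_R) = \sup H_{D_R}$ and $\lambda_1(B^W_R) = \sup H_{B^W_R}$ with $H_D := \{\eta \geq 0 : \limsup_n (\eta/2)^n \mathcal{A}_n(D)/\Gamma(n+1) < \infty\}$. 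The comparison constellation bounded from above, together with the $w$-balance from below of $M^m_W = C^m_{w,1,h}$, supplies (via \cite{HMP2} and \cite{HMP}) the reverses of the moment and volume inequalities used in Theorem \ref{th_const_below2}, namely
\begin{equation*}
\frac{\mathcal{A}_n(D_R)}{\Vol(\partial D_R)} \leq \frac{\mathcal{A}_n(B^W_R)}{\Vol(\partial B^W_R)} \quad \text{for all } n \in \ene, \qquad \Vol(\partial D_R) \leq C(R)\,\Vol(\partial B^W_R),
\end{equation*}
so that $\mathcal{A}_n(D_R) \leq C(R)\,\mathcal{A}_n(B^W_R)$ with $C(R)$ independent of $n$. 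This forces the inclusion $H_{B^W_R} \subseteq H_{D_R}$ and, on taking suprema, \eqref{ineqgeq_submanifold3}.

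For the rigidity statement I would reason by contradiction. Suppose $\lambda_1(D_{R_0}) = \lambda_1(B^W_{R_0})$ for some $R_0$ while $D_{R_0}$ is not a geodesic cone in $N$. Then on a set of positive measure in $D_{R_0}$ at least one of the three ingredients feeding the comparison must fail to be sharp: the Laplacian comparison of Theorem \ref{corLapComp}, the radial tangency $\mathcal{T}(x)=1$, or the mean-curvature equality $\mathcal{C}(x)=h(r(x))$. Under the strict balance hypothesis (strict inequality in \eqref{eqBalA}) this strict defect propagates through the Green-operator iteration $u_k = k\,G(u_{k-1})$ and the divergence-theorem representation of $\mathcal{A}_n$ to yield a uniform multiplicative gap $\mathcal{A}_n(D_{R_0}) \leq (1-\delta)\,C(R_0)\,\mathcal{A}_n(B^W_{R_0})$ for all large $n$ and some $\delta>0$. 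But then some $\eta$ strictly larger than $\lambda_1(B^W_{R_0})$ would still satisfy the summability condition defining $H_{D_{R_0}}$, contradicting the eigenvalue equality. Hence $D_{R_0}$ must be a geodesic cone emanating from $p$. In the hyperbolic case $N = \hh^n(b)$ with $P$ minimal, a geodesic cone in $\hh^n(b)$ that is smooth at its apex $p$ is swept out by radial geodesics of $\hh^n(b)$; combined with vanishing mean curvature, a direct computation using the Laplacian comparison with equality forces the second fundamental form to vanish identically, so $P$ is totally geodesic.

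The main obstacle will be establishing the \emph{uniform} asymptotic gap used in the rigidity step. Since $\lambda_1$ is extracted only from the exponential growth rate of $\mathcal{A}_n/\Gamma(n+1)$, a mere strict inequality at finitely many $n$ is not enough; one must show that a localized geometric defect in $D_{R_0}$ accumulates under the iteration $u_k \mapsto k\,G(u_k)$ to produce a non-vanishing multiplicative loss at infinity. This is precisely the geometric role played by the strict version of \eqref{eqBalA}: strict balance ensures that each application of $G$ cannot cancel the defect, so that the gap is preserved, and in fact amplified, through the bootstrapping argument of Sato \cite{Sa} underpinning Theorem \ref{lambda1model}.
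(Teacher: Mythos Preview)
Your derivation of the inequality \eqref{ineqgeq_submanifold3} is essentially the paper's: you feed the moment-spectrum comparison from \cite{HMP2} and the boundary-volume ratio into the McDonald--Myers description \eqref{eqMcMexpress}, and since the ratio $\Vol(\partial D_R)/\Vol(\partial B^W_R)$ is a constant in $n$, the supremum defining $\lambda_1$ is unaffected. That part is fine.

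The rigidity argument, however, has a genuine gap. You propose that a non-cone structure would produce a \emph{uniform multiplicative} defect
\[
\mathcal{A}_n(D_{R_0}) \leq (1-\delta)\,C(R_0)\,\mathcal{A}_n(B^W_{R_0})
\]
and that this would push some $\eta > \lambda_1(B^W_{R_0})$ into $H_{D_{R_0}}$. But the McDonald--Myers formula is completely insensitive to constant multiplicative factors: for any fixed $c>0$, the set $\{\eta : \limsup_n (\eta/2)^n c\,\mathcal{A}_n / \Gamma(n+1) < \infty\}$ is identical to the set with $c=1$. A factor $(1-\delta)$, no matter how it arises, cannot move the supremum. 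To get a strict eigenvalue gap by this route you would need an \emph{exponential} defect, i.e.\ $\mathcal{A}_n(D_{R_0}) \leq C\rho^{n}\,\mathcal{A}_n(B^W_{R_0})$ with $\rho<1$, and nothing in the Sato bootstrapping or the strict balance condition delivers that.

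The paper avoids this trap by not trying to separate the eigenvalues directly. Instead it keeps the volume ratio $\Vol(\partial D_R)/\Vol(\partial B^W_R)$ explicit in the inequality chain and combines it with the isoperimetric inequalities $\Vol(\partial D_R)/\Vol(D_R) \geq \Vol(\partial B^W_R)/\Vol(B^W_R)$ and $\Vol(D_R)\geq \Vol(B^W_R)$ from \cite{HMP}. Equality $\lambda_1(D_{R_0})=\lambda_1(B^W_{R_0})$ is then argued to force $\Vol(D_{R_0})=\Vol(B^W_{R_0})$, and it is the \emph{volume} equality (not a moment-spectrum gap) whose rigidity is already established in \cite{HMP,HMP2} under strict balance: it yields $\nabla^P r = \nabla^N r$ on $D_{R_0}$, hence the cone structure, and in the minimal hyperbolic case total geodesicity. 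So the route to rigidity is eigenvalue equality $\Rightarrow$ volume equality $\Rightarrow$ cone, with the last implication imported wholesale from the earlier papers rather than produced by tracking defects through the Green iteration.
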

\begin{proof}
As in the above theorem, we can again obtain inequality $\lambda_1(D_R) \geq \lambda_1 (B^W_{R})$ using the  description (\ref{desc}) of the first Dirichlet eigenvalue given in \cite{McM} restricted to the extrinsic balls.
Under the assumptions of the theorem we now have the following inequalities concerning the mean exit time moment spectrum of the extrinsic balls, see \cite{HMP2}:

\begin{equation}\label{ineqspec2}
\frac{\mathcal{A}_{n}(D_R)}{\Vol(\partial D_R)} \leq
\frac{\mathcal{A}_{n}(B^W_{R})}{\Vol(\partial
B^W_{R})}\,\,\,\,\textrm{for all}\,\, n \in \ene \quad .
\end{equation}

And the corresponding inequalities concerning the relative volumes, see \cite{HMP}:

\begin{equation}\label{ineqvol2}
\begin{aligned}
\frac{\Vol(\partial D_R)}{\Vol(D_R)} &\geq \frac{\Vol(\partial B^W_{R})}{\Vol( B^W_{R})} \quad , \\
\Vol(D_R) &\geq \Vol( B^W_{R})\quad .
\end{aligned}
\end{equation}

Then, using inequality (\ref{ineqspec2}) the set $$H_2:=\{\eta \geq 0 \,:\, \lim_{n \to \infty}  \sup \left(\frac{\eta}{2}\right)^n\frac{\mathcal{A}_{n}(B^W_{R})}{\Gamma(n+1)} \frac{\Vol(\partial D_R)}{\Vol(\partial B^W_{R})}<\infty\}$$  is included in the set $$H_1:= \{\eta \geq 0 \,:\, \lim_{n \to \infty} \sup \left(\frac{\eta}{2}\right)^n\frac{\mathcal{A}_{n}(D_R)}{\Gamma(n+1)} <\infty\}\quad ,$$ so we have

\begin{equation}
\begin{aligned}
\lambda_1(D_R)= \sup \{\eta \geq 0 \,:\, \lim_{n \to \infty} & \sup \left(\frac{\eta}{2}\right)^n\frac{\mathcal{A}_{n}(D_R)}{\Gamma(n+1)} <\infty\} \\ \geq
\sup \{\eta \geq 0 \,:\, \lim_{n \to \infty} & \sup \left(\frac{\eta}{2}\right)^n\frac{\mathcal{A}_{n}(B^W_{s(R)})}{\Gamma(n+1)} \frac{\Vol(\partial D_R)}{\Vol(\partial B^W_{R})}<\infty\} \\= \frac{\Vol(\partial D_R)}{\Vol(\partial B^W_{R})} \lambda_1(B^W_{R}) \quad .
\end{aligned}
\end{equation}

Then we have inequality $\lambda_1(D_R) \geq \lambda_1 (B^W_{R})$ using now inequalities (\ref{ineqvol2}).

This alternative proof allows us to discuss the equality case
as in the paper \cite{HMP2}: If $M^m_W$ is strictly balanced, (in the sense that inequality (\ref{eqBalA}) is an strict inequality for all radius $r$), equality $\lambda_1(D_{R_0}) = \lambda_1 (B^W_{R_0})$ for some fixed radius $R_0$ implies the equality between the volumes $ \Vol(D_{R_0}) = \Vol( B^W_{R_0})$, which implies in itself that $\nabla^P r=\nabla^N r$ and $D_{R_0}$ is a geodesic cone in $N$ swept out by the radial geodesics from the center $p$ of the ball. If $P$ is minimal and $N=\hh^n(b)$ is the hyperbolic space, we then have that $P$ is a totally geodesic submanifold in $\hh^n(b)$.
\end{proof}

\subsection{Intrinsic comparison}\label{subsec3}

We finally consider the intrinsic consequences of Theorems
\ref{th_const_below} and \ref{th_const_above} assuming that
$P^m=N^n$. In this case, the extrinsic distance to the pole $p$
becomes the intrinsic distance in $N$, so, for all $r$ the
extrinsic domains $D_r$ become the geodesic balls $B^N_r$ of the
ambient manifold $N$. Then, for all $x \in P$
\begin{eqnarray*}
\nabla^P r(x)&=&\nabla r (x),\\
H_P(x)&=&0.
\end{eqnarray*}
As a consequence, $\|\nabla^P r\|=1$, so $g(r(x))=1$ and
$\mathcal{C}(x)=h(r(x))=0$, the stretching function becomes the
identity $s(r)=r$, $W(s(r))=w(r)$, and the isoperimetric
comparison space $C_{w,g,h}^m$ is reduced to the auxiliary model
space $M^m_w$.\\

As a corollary of the proofs of the theorems above we therefore have:

\begin{theorem} \label{th_const_below_intrinsic}
Let $B^N_R$ be a geodesic ball of a complete Riemannian manifold
$N^n$ with a pole $p$ and suppose that the $p$-radial sectional
curvatures of $N^n$ are bounded from below by the $p_w$-radial
sectional curvatures of a $w$-model space $M^n_w$.  Then
\begin{equation}\label{ineqleq_intrinsic}
 \lambda_1(B^N_R) \leq \lambda_1 (B^w_{R}),
 \end{equation}
where $B^{w}_{R}$ is the $p_{w}$-centered geodesic ball in $M^n_w$. If $M^m_w$ is strictly balanced, then  equality in
 \eqref{ineqleq_intrinsic}  for some fixed radius $R_0$ implies that $B^N_{R_0}$ and  $B^w_{R_0}$ are isometric.
\end{theorem}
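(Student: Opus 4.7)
The plan is to derive this statement as the intrinsic specialization of Theorem \ref{th_const_below}, together with the rigidity analysis of Theorem \ref{th_const_below2}. Setting $P^m=N^n$ trivializes the extrinsic geometry: $\nabla^P r = \nabla^N r$, so $\Vert \nabla^P r \Vert \equiv 1$ and one may take the tangency function $g \equiv 1$; the mean curvature vector $H_P$ vanishes identically, so the radial mean curvature $\mathcal{C}(x) \equiv 0$ and $h \equiv 0$. Consequently the stretching function is the identity $s(r)=r$, the auxiliary warping function $W$ coincides with $w$, and the isoperimetric comparison space $C^n_{w,1,0}$ reduces to $M^n_w$ itself; the extrinsic balls $D_R$ coincide with the intrinsic geodesic balls $B^N_R$, and $\{N^n, N^n, M^n_w\}$ fits the hypotheses of Definition \ref{defConstellatNew1}.

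To prove the inequality I would transplant the model eigenfunction $g_\infty$ of $\lambda_1(B^w_R)$ provided by Theorem A to the radial test function $v(q)=g_\infty(r(q))$ on $B^N_R$. Because $g_\infty'\leq 0$, $\Vert \nabla^N r\Vert=1$ and $H_P=0$, the Laplacian comparison in Theorem \ref{corLapComp}(i) collapses to
\begin{equation*}
\Delta^N v \geq g_\infty''(r) + (n-1)\eta_w(r)\,g_\infty'(r) = -\lambda_1(B^w_R)\,v,
\end{equation*}
where the last equality is just the radial Laplace eigenvalue equation satisfied by $g_\infty$ on $M^n_w$. Since $v$ vanishes on $\partial B^N_R$, Barta's inequality yields $\lambda_1(B^N_R) \leq \sup_{B^N_R}(-\Delta^N v/v) \leq \lambda_1(B^w_R)$. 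In this intrinsic specialization no $w$-balance hypothesis is needed to obtain the inequality itself, because Lemma \ref{paren} is used only through the trivial fact that $g_\infty$ satisfies the radial model eigenvalue equation.

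For the rigidity part I would follow the second strategy of Theorem \ref{th_const_below2}, combining the McDonald--Myers formula of Theorem \ref{thmMcDonaldExpress} with the moment-spectrum and volume isoperimetric comparisons from \cite{HMP2} and \cite{HMP}; in the intrinsic setting these read
\begin{equation*}
\frac{\mathcal{A}_k(B^N_R)}{\Vol(\partial B^N_R)} \geq \frac{\mathcal{A}_k(B^w_R)}{\Vol(\partial B^w_R)} \quad\text{and}\quad \Vol(B^N_R) \leq \Vol(B^w_R).
\end{equation*}
An equality $\lambda_1(B^N_{R_0})=\lambda_1(B^w_{R_0})$ propagates, via the McDonald--Myers characterization, to equality of the corresponding volumes $\Vol(B^N_{R_0})=\Vol(B^w_{R_0})$; under the strict $w$-balance hypothesis on $M^n_w$, the rigidity clause of the volume comparison in \cite{HMP}, paralleling the equality discussion in Theorem \ref{th_const_above2}, forces the metric on $B^N_{R_0}$ to agree with the warped-product model metric $dr^2 + w(r)^2\, g_{S^{n-1}}$, yielding the desired isometry.

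The main obstacle is the rigidity step: the inequality is a rather direct consequence of Hessian/Laplacian comparison and Barta's inequality, but passing from equality of first eigenvalues all the way to a full metric isometry, rather than merely matching volumes or radial scalar quantities, relies crucially on strict balance to exclude non-trivial collapse in the spherical directions transverse to the radial ones, and requires invoking the sharper equality statements from \cite{HMP, HMP2} rather than only their inequality forms.
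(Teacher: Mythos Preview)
Your proposal is correct and follows essentially the same route as the paper: the inequality comes from the intrinsic specialization $P=N$, $g\equiv 1$, $h\equiv 0$ of Theorem~\ref{th_const_below} (transplanting $g_\infty$ and applying Barta), and the rigidity comes by passing through the second strategy to obtain $\Vol(B^N_{R_0})=\Vol(B^w_{R_0})$. The only point worth sharpening is the very last step: for the conclusion that equal volumes force an isometry, the paper invokes not the equality clauses from \cite{HMP} but rather the classical Bishop volume comparison theorem in the form of \cite[Corollary~3.2, Chapter~IV]{S}, which under the lower radial sectional curvature bound directly yields that $B^N_{R_0}$ is isometric to $B^w_{R_0}$.
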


\begin{theorem} \label{th_const_above_intrinsic}
Let $B^N_R$ be a geodesic ball of a complete Riemannian manifold
$N^n$ with a pole $p$ and suppose that the $p$-radial sectional
curvatures of $N^n$ are bounded from above by the $p_w$-radial
sectional curvatures of a $w$-model space $M^n_w$. Then
\begin{equation}\label{ineqgeq_intrinsic}
 \lambda_1(B^N_R) \geq \lambda_1 (B^w_{R}),
 \end{equation}
where $B^{w}_{R}$ is the geodesic ball in $M^n_w$. If $M^m_w$ is strictly balanced, then  equality in
 \eqref{ineqgeq_intrinsic}  for some fixed radius $R_0$ implies that $B^N_{R_0}$ and  $B^w_{R_0}$ are isometric.

\end{theorem}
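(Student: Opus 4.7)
The plan is to obtain this theorem as a direct specialization of Theorem \ref{th_const_above} (equivalently Theorem \ref{th_const_above2}) to the intrinsic situation, after which the rigidity statement will come from the volume/moment-spectrum comparison strategy of Subsection \ref{subsec2}. First I would record the simplifications that occur when $P^m = N^n$: one has $\nabla^P r = \nabla^N r$, hence $\|\nabla^P r\| \equiv 1$, so the tangency function is $g \equiv 1$; the mean curvature vector $H_P$ vanishes identically, so the natural radial bound is $h \equiv 0$; the stretching function of Definition \ref{stretchingfunct} reduces to the identity $s(r) = r$; and the isoperimetric comparison space $C^n_{w,1,0}$ of Definition \ref{defCspace} coincides with the given model $M^n_w$.

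Next I would prove the inequality $\lambda_1(B^N_R) \geq \lambda_1(B^w_R)$ by a transplantation argument. Let $g_\infty$ be the radial first Dirichlet eigenfunction of $B^w_R$ furnished by Theorem \ref{lambda1model}, normalized by $g_\infty(0) = 1$. Since $g_\infty$ is the first eigenfunction on the rotationally symmetric ball, it is positive on $[0,R)$, vanishes at $R$, and is non-increasing, i.e.\ $g_\infty'(r) \leq 0$. Define $v(x) := g_\infty(r(x))$ on $B^N_R$. By Theorem \ref{corLapComp}(ii), applied with $P = N$ so that $\|\nabla^N r\| = 1$ and $H_P \equiv 0$, one gets
\begin{equation*}
\Delta^N v \,\leq\, g_\infty''(r) + (n-1)\eta_w(r)\,g_\infty'(r) \,=\, \Delta^{M^n_w} g_\infty \,=\, -\lambda_1(B^w_R)\,v \quad .
\end{equation*}
Since $v > 0$ in the interior and $v|_{\partial B^N_R} = 0$, Barta's inequality gives
\begin{equation*}
\lambda_1(B^N_R) \,\geq\, \inf_{B^N_R}\!\left(-\Delta^N v / v\right) \,\geq\, \lambda_1(B^w_R) \quad .
\end{equation*}

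For the rigidity statement I would pass to the second strategy, mirroring the equality discussion in Theorem \ref{th_const_above2}. Suppose equality $\lambda_1(B^N_{R_0}) = \lambda_1(B^w_{R_0})$ holds for some $R_0$, and suppose $M^n_w$ is strictly $w$-balanced from below. By Theorem \ref{thmMcDonaldExpress} combined with the moment spectrum inequality (\ref{ineqspec2}) and the volume inequalities (\ref{ineqvol2}) from \cite{HMP,HMP2}, equality of the first eigenvalues forces equality of volumes $\Vol(B^N_{R_0}) = \Vol(B^w_{R_0})$. The strict balance hypothesis is precisely what prevents this volume coincidence from being compatible with any strict inequality in the underlying Laplacian/curvature comparison, so the $p$-radial sectional curvatures of $N$ must equal $-w''/w$ throughout $B^N_{R_0}$. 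Because $p$ is a pole, the exponential map $\exp_p$ is a diffeomorphism on the corresponding ball in $T_p N$, and the equality of radial curvatures together with the warped product description of $M^n_w$ identifies $B^N_{R_0}$ with $B^w_{R_0}$ via an isometry.

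The inequality part is essentially routine once the Laplacian comparison is available with the right signs, so the main obstacle is the rigidity statement: one must trace back from equality of a single real-valued spectral invariant to pointwise geometric equality. The crucial leverage here is the strict balance assumption, which ensures that each link in the chain \emph{eigenvalue equality $\Rightarrow$ moment equality $\Rightarrow$ volume equality $\Rightarrow$ radial cone structure $\Rightarrow$ radial curvature equality} is forced rather than merely consistent; invoking the rigidity results of \cite{HMP2} at the moment-spectrum step is what I expect to require the most care.
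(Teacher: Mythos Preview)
Your approach is essentially the paper's own: specialize the extrinsic theorems to $P=N$ (so $g\equiv1$, $h\equiv0$, $s=\mathrm{id}$, $C^n_{w,1,0}=M^n_w$), transplant the radial eigenfunction $g_\infty$ via the Laplacian comparison Theorem~\ref{corLapComp}(ii) and apply Barta, and then for rigidity pass to the second strategy of Subsection~\ref{subsec2} to deduce volume equality. One small remark on the inequality step: because $\|\nabla^N r\|\equiv1$ in the intrinsic case, you do not need the $w$-balance hypothesis (nor Lemma~\ref{paren}) for the inequality~\eqref{ineqgeq_intrinsic} itself; this is consistent with the statement of the theorem, where balance is only invoked for the equality discussion.

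The only place your write-up diverges from the paper is the final rigidity step. You argue informally that volume equality forces equality of the $p$-radial sectional curvatures and then that the exponential map gives the isometry. The paper short-circuits this: once volume equality $\Vol(B^N_{R_0})=\Vol(B^w_{R_0})$ is in hand, it invokes the equality case of Bishop's volume comparison theorem (Corollary~3.2, Chapter~IV in \cite{S}) to conclude directly that $B^N_{R_0}$ and $B^w_{R_0}$ are isometric. This is both cleaner and more robust than tracing through a chain ``moment equality $\Rightarrow$ radial curvature equality $\Rightarrow$ warped-product identification'' by hand, and it is where the curvature bound from above (as opposed to merely the strict balance of $M^n_w$) does its work. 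I would replace your last two sentences with a direct citation of Bishop's rigidity.
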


\begin{remark}
The equality statements -- now in \emph{both} theorems -- come from the fact that equality for some fixed radius $R_0$ in inequalities (\ref{ineqleq_intrinsic}) and (\ref{ineqgeq_intrinsic}), leads via arguments in Theorems \ref{th_const_below2} and \ref{th_const_above2} to the equality among the volumes of the geodesic balls $B^N_{R_0}$ and $B^w_{R_0}$. We conclude that they are therefore isometric using a direct application of Bishop's volume comparison theorem, as found e.g. in Corollary 3.2, Chapter IV in \cite{S}.
\end{remark}


\end{document}